\numberwithin{equation}{section}
\newtheorem{theorem}{Theorem}[section]
\newtheorem{proposition}[theorem]{Proposition}
\newtheorem{lemma}[theorem]{Lemma}
\newtheorem{conjecture}[theorem]{Conjecture}
\theoremstyle{definition}
\newtheorem{remark}[theorem]{Remark}
\title[The cycle map]{On the cycle map of a finite group}
\author{Masaki Kameko}
\address{
Department of Mathematical Sciences,
Shibaura Institute of Technology,
307 Minuma-ku Fukasaku, Saitama-City 337-8570, Japan}
\email{kameko@shibaura-it.ac.jp}
\subjclass[2000]{Primary 14C15; Secondary 55R40, 55R35}
\keywords{Classifying space, cycle map}
\begin{document} 
\begin{abstract}
Let $p$ be an odd prime number.
We show that 
there exists a finite group of order $p^{p+3}$ 
whose the mod $p$ cycle map from the mod $p$ Chow ring of 
its classifying space to its ordinary  mod $p$ cohomology
is not injective.
\end{abstract}
\maketitle
\section{Introduction}\label{section:1}

The Chow group $CH^iX$ of a smooth 
algebraic variety $X$ is the group of finite $\mathbb Z$-linear combinations 
of closed subvarieties of $X$ of codimension $i$ modulo rational equivalence
and
$\bigoplus_{i\geq 0}  CH^iX$, called the Chow ring of $X$, is a
ring under intersection product.
It is an important subject to study in algebraic geometry. 
For a smooth complex algebraic variety,
the cycle map is a homomorphism from the Chow ring to 
 the ordinary integral cohomology of the underlying topological space.
Thus,  
the cycle map relates algebraic geometry to algebraic topology.
In \cite{totaro-1999}, Totaro 
considered the Chow ring of the classifying space $BG$ of an algebraic group $G$. 
In his recently published book \cite{totaro-2014}, for each prime number $p$, 
Totaro gave an example of finite 
group $K$ of order $p^{2p+1}$ such that the mod $p$ cycle map
\[
cl:CH^2BK/p \to H^4(BK)
\]
is not injective, where $H^*(-)$ is the ordinary mod $p$ cohomology
and  the finite group $K$ is regarded as a complex  algebraic group.
Totaro  wrote 
\lq\lq ... but there are probably smaller examples\rq\rq \,
in his book.

In this paper, we find a smaller example, possibly the smallest one.
To be precise, we construct a finite group $H$ of order $p^{p+3}$ 
to prove the following result.

\begin{theorem}\label{theorem:1.1}
For each prime number $p$, there exists a finite group $H$ 
of order $p^{p+3}$ such that
the mod $p$ cycle map
$
cl: CH^2BH/p \to H^4(BH)
$
 is not injective, where the finite group $H$ is regarded as a complex algebraic group.
\end{theorem}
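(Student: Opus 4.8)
The plan is to pin down the group $H$ together with a single explicit class $z\in CH^{2}BH/p$ satisfying $z\neq 0$ and $cl(z)=0$, and to organize everything around Totaro's comparison between the Chow ring of a classifying space and complex cobordism. Working $p$-locally, I would use that the natural map
\[
\Phi\colon BP^{*}(BH)\otimes_{BP^{*}}\mathbb Z_{(p)}\longrightarrow CH^{*}(BH)_{(p)}
\]
is surjective and that, under $\Phi$, the mod $p$ cycle map is identified with the ring homomorphism $BP^{*}(BH)\otimes_{BP^{*}}\mathbb F_{p}\to H^{*}(BH)$ coming from the Thom reduction $BP\to H\mathbb F_{p}$. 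In this language a class of $CH^{2}BH/p$ is represented by an element of $BP^{4}(BH)$; it lies in $\ker(cl)$ when that representative maps to $0$ in $H^{4}(BH)$, and to prove non-injectivity I must in addition certify that its image in $CH^{2}BH/p$ is nonzero. The theorem thus reduces to producing, for a well-chosen $H$, a torsion class in $BP^{4}(BH)$ that dies in ordinary cohomology yet survives to $CH^{2}BH/p$.

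For the candidate I would take a codimension-two combination of Chern classes, concretely a difference $z=c_{2}(\rho)-c_{2}(\rho')$ of second Chern classes (or a closely related codimension-two combination) of two representations of $H$ chosen so that $\rho$ and $\rho'$ have the same $c_{2}$ in $H^{4}(BH)$ but not in $CH^{2}BH/p$. Codimension one plays no role, since $CH^{1}BH\cong H^{2}(BH;\mathbb Z)$ with the cycle map an isomorphism, so codimension two is the first place an obstruction can appear. Arranging such a pair is exactly what the choice of $H$ must accomplish. I would build $H$ as a central extension
\[
1\longrightarrow Z\longrightarrow H\longrightarrow Q\longrightarrow 1
\]
of an elementary abelian $p$-group $Q$ by a small central $p$-group $Z$, with $|H|=p^{p+3}$, whose extension class produces a nontrivial $v_{1}$-torsion phenomenon in $BP^{*}(BH)$. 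Since $|v_{1}|=-2(p-1)$, the $v_{1}$-torsion that feeds an obstruction in codimension two sits in codimension roughly $p+1$, i.e.\ in cohomological degree about $2p+2$; carrying a nontrivial such class forces the $p$-rank of $H$ to grow with $p$, which is the structural reason behind an order of the form $p^{p+O(1)}$ and behind the nonexistence of a bounded-rank example.

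The cohomological half, the vanishing $cl(z)=0$, is the more routine one. Because $z$ is a polynomial in Chern classes, its image in $H^{4}(BH)$ is controlled by the first Chern classes of characters after restriction to the elementary abelian subgroups of $H$, where the Whitney formula makes everything explicit. I would select $\rho$ and $\rho'$ so that these restrictions agree on every elementary abelian subgroup; by Quillen's detection theorem the difference is then nilpotent in $H^{*}(BH)$, and a direct inspection of the (computable) low-degree part of $H^{*}(BH)$ shows that in degree $4$ it is actually zero.

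The crux is the reverse inequality $z\neq 0$ in $CH^{2}BH/p$. The essential difficulty is that the Chow ring, \emph{unlike} mod $p$ cohomology, need not be detected on the family of elementary abelian subgroups, so the very computation proving $cl(z)=0$ gives no purchase on whether $z$ itself vanishes. To obtain the required lower bound I would compute $BP^{*}(BH)$ through the relevant range by means of the Gysin and transfer sequences attached to $Z\subset H$ and to the defining extension, together with the fundamental relation $BP^{*}(B\mathbb Z/p)=BP^{*}[[x]]/([p]_{BP}(x))$, and thereby exhibit the $v_{1}$-torsion summand and verify that the representative of $z$ does not lie in $(p,v_{1},v_{2},\dots)\,BP^{*}(BH)$; equivalently, that $BP^{*}(BH)$ fails to be Landweber flat in this degree. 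This is where essentially all the computational weight lies, and where the extension defining $H$ must be chosen with care so that the torsion class is genuinely nonzero and is not absorbed into decomposables or into the image of a transfer. Granting this, $z$ is a nonzero element of $CH^{2}BH/p$ annihilated by $cl$, and the resulting order $p^{p+3}$ improves Totaro's $p^{2p+1}$.
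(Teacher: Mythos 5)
Your proposal is a research plan, not a proof: nowhere do you construct the group $H$, the representations $\rho,\rho'$, or carry out any computation on which the conclusion rests. Every load-bearing step is deferred (``arranging such a pair is exactly what the choice of $H$ must accomplish'', ``the extension defining $H$ must be chosen with care'', ``this is where essentially all the computational weight lies''), and producing and verifying such an $H$ is precisely the content of the theorem. The paper's $H$ is explicit: $H=p_{+}^{1+2}\times H_2/\langle \Delta(\xi)\rangle$ inside $G=SU(p)\times SU(p)/\langle \Delta(\xi)\rangle$, with $H_2\subset SU(p)$ an explicit group of order $p^{p+1}$, and the class is $c_2(\lambda'')$ for a virtual representation $\lambda''=\lambda-\lambda'$ of $G$ satisfying $c_2(\lambda'')=p\,x_4$ in $H^4(BG;\mathbb{Z})$; the vanishing of its mod $p$ cycle class is then automatic, with no detection argument needed. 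Note that your proposed vanishing mechanism is also flawed on its own terms: choosing $\rho,\rho'$ whose restrictions agree on all elementary abelian subgroups only makes $c_2(\rho)-c_2(\rho')$ \emph{nilpotent} mod $p$ by Quillen's theorem, not zero, and ``direct inspection shows it is actually zero'' is not an argument --- degree-$4$ classes invisible to elementary abelian subgroups are exactly what one must worry about here. The genuinely hard half, nonvanishing in $CH^2BH/p$, is done in the paper by two Leray--Serre spectral sequence computations resting on special properties of $H_2$ (Propositions 5.1, 5.2, 6.1): first, $p f^*(x_4)\neq 0$ in $H^4(BH;\mathbb{Z})$ because the mod $p$ reduction of $f^*(x_4)$ is not a Bockstein image; second, no virtual representation $\mu$ of $H$ can have $f^*(x_4)\in \mathrm{Im}\,\mu^*$ up to the relevant indeterminacy, because $Q_1$ annihilates $\mathrm{Im}\,\mu^*$ mod $p$ while restriction to the two rank-$3$ elementary abelian subgroups $A_3$ and $A_3'$ shows $Q_1$ is nonzero on the class in question. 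Merkurjev's theorem ($CH^2BG$ is generated by Chern classes and injects into $H^4(BG;\mathbb{Z})$) is what converts these purely cohomological facts into the Chow-theoretic conclusion; your proposal has no counterpart for any of these steps.

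There is also an error in your framework that you would have to repair before the plan could even be executed. Totaro's comparison map goes from the Chow ring to cobordism, $CH^*(BH)_{(p)}\to BP^{2*}(BH)\otimes_{BP^*}\mathbb{Z}_{(p)}$, not the other way: there is no natural surjection $\Phi\colon BP^*(BH)\otimes_{BP^*}\mathbb{Z}_{(p)}\to CH^*(BH)_{(p)}$, and the identification of the mod $p$ cycle map with $BP^*(BH)\otimes_{BP^*}\mathbb{F}_p\to H^*(BH)$ is not a theorem (it would follow from Totaro's conjectural isomorphism, which you cannot cite). What is true, and would suffice, is the one-way implication coming from the factorization of the cycle map: for a Chern-class difference $z$, if $z=pw$ in $CH^2BH$ then the $BP$-Chern class of $z$ lies in $(p,v_1,v_2,\dots)BP^*(BH)$, so exhibiting $z_{BP}\notin(p,v_1,v_2,\dots)BP^*(BH)$ would indeed prove $z\neq 0$ in $CH^2BH/p$. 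That corrected version of your strategy is essentially the cobordism-flavoured form of what the paper does more elementarily --- its $Q_1$-argument is the ordinary-cohomology shadow of the $v_1$-obstruction you invoke --- but it still requires the explicit group, explicit representations, and the computations your proposal leaves blank.
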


For a complex algebraic group $G$, the following results were obtained 
by Totaro using Merkurjev's theorem  in \cite[Corollary 3.5]{totaro-1999}.
\begin{enumerate}
\item $CH^2BG$ is generated by Chern classes.
\item $CH^2BG\to H^{4}(BG; \mathbb{Z})$ is injective.
\end{enumerate}
Thus, we may use the ordinary integral cohomology 
and Chern classes to study the Chow group
$CH^2BG$. A problem concerning 
the Chow group $CH^2BG$ in algebraic geometry could be viewed
as a problem on the Chern subgroup of the ordinary integral cohomology $H^4(BG;\mathbb{Z})$, 
that is, the subgroup of $H^4(BG;\mathbb{Z})$ generated by 
Chern classes of complex representations of
$G$, in classical algebraic topology.
In what follows, we consider $CH^2(BG)$ as the Chern subgroup 
of the integral cohomology $H^4(BG; \mathbb{Z})$ 
and the mod $p$ cycle map $CH^2BG/p \to H^4(BG)$ 
as the homomorphism induced by the mod $p$ reduction 
$\rho:H^4(BG; \mathbb{Z}) \to H^4(BG)$.
Since we consider the ordinary integral and mod $p$ cohomology only, 
the group $G$ could be a topological group and 
it need not to be a complex algebraic group.

Throughout the rest of this paper, we assume that $p$ is an odd prime number
unless otherwise stated explicitly.
Let $p_{+}^{1+2}$ be the extraspecial $p$-group of order $p^3$ with exponent $p$. 
We consider it as a subgroup of the special unitary group $SU(p)$.
We will define a subgroup $H_2$ of $SU(p)$ in Section~\ref{section:2}.
The group $H$ in Theorem~\ref{theorem:1.1} is given in terms of $p_{+}^{1+2}$ 
and $H_2$, that is 
\[
H=p_{+}^{1+2} \times H_2/\langle \Delta(\xi)\rangle, 
\]
where
$\langle \Delta(\xi)\rangle$ is a cyclic group in the center of $SU(p)\times SU(p)$.
We define the group $G$ as 
\[
G=SU(p)\times SU(p)/\langle \Delta(\xi)\rangle.
\]
We will give the detail of $G$, $H$ and $H_2$  in Section~\ref{section:2}.
What we prove in this paper is the following theorem.

\begin{theorem}
\label{theorem:1.2}
Let $p$ be an odd prime number.
Let $K$ be a subgroup of 
\[
G=SU(p)\times SU(p) /\langle \Delta(\xi) \rangle
\]
containing 
\[
H=p_{+}^{1+2} \times H_2 /\langle \Delta(\xi) \rangle.
\]
Then, the mod $p$ cycle map 
$
cl: CH^2BK/p \to H^4(BK)
$
is  not injective.
\end{theorem}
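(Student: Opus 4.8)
The plan is to exhibit a single representation-theoretic class on $BG$, restrict it down the tower $BH\to BK\to BG$, and use naturality of the cycle map to reduce the assertion for an arbitrary intermediate $K$ to two statements, one about $G$ and one about $H$. Write $\mathrm{ad}_1,\mathrm{ad}_2$ for the adjoint representations of the two $SU(p)$-factors; since each is trivial on the whole center, both descend to $G$, hence restrict to every $K$ and to $H$. I would set
\[
\alpha = c_2(\mathrm{ad}_1)-c_2(\mathrm{ad}_2)\in CH^2BG,
\]
an element of the Chern subgroup, which pulls back to $2p(c_2-c_2')$ under the covering $SU(p)\times SU(p)\to G$, where $c_2,c_2'$ are the second Chern classes of the two standard representations. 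Because $cl$ commutes with the restriction homomorphisms induced by $BH\to BK\to BG$, it suffices to prove: (I) $\alpha$ is divisible by $p$ in $H^4(BG;\mathbb Z)$; and (II) $\alpha|_H$ is not divisible by $p$ in $CH^2BH$. Granting these, (I) gives $cl_K(\alpha|_K)=\rho_K(\alpha|_K)=0$ for every $K$, while (II), together with the restriction $CH^2BK/p\to CH^2BH/p$ carrying $[\alpha|_K]\mapsto[\alpha|_H]\neq 0$, shows $[\alpha|_K]$ is a nonzero element of $\ker cl_K$.

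For (I), I would pass to the maximal torus $T_G=(T\times T)/\langle\Delta(\xi)\rangle$, whose character lattice is $\widehat{T_G}=\{(\lambda,\mu):|\lambda|+|\mu|\equiv 0\bmod p\}$, where $|\cdot|$ denotes the class in $\widehat T/(\text{root lattice})\cong\mathbb Z/p$. Since $H^4(BG;\mathbb Z)$ is torsion free and restricts injectively to the torus, divisibility can be tested inside $\mathrm{Sym}^2\widehat{T_G}$. A direct lattice computation shows that the difference $e_2(\lambda)-e_2(\mu)$, which represents $c_2-c_2'$, already lies in $\mathrm{Sym}^2\widehat{T_G}$, whereas the sum $e_2(\lambda)+e_2(\mu)$ does not, only $p$ times it does. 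Conceptually the asymmetry is forced because $\Delta(\xi)$ sees only $|\lambda|+|\mu|$, so the antisymmetric direction is unobstructed. Hence $c_2-c_2'$ lifts to an integral class on $BG$ and $\alpha=p\cdot 2(c_2-c_2')$ is divisible by $p$.

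The heart of the argument, and the step I expect to be the main obstacle, is (II). The governing principle is already visible on $BG$: restricting any representation to the extraspecial subgroup $p_{+}^{1+2}\subset SU(p)$, one sees that a representation with nontrivial central character has dimension divisible by $p$, because the faithful-center irreducibles of $p_{+}^{1+2}$ are all $p$-dimensional; combined with the fact that representations with trivial central character have $c_2\equiv 0\bmod p$ (the $BPSU(p)$ computation), this forces every $c_2(\rho)$ on $BG$ to be divisible by $p$, so that $2(c_2-c_2')$, whose class is not divisible by $p$, cannot be a Chern class. The difficulty is that on the finite $p$-group $H$ this clean dichotomy breaks down: $H^2(BH)\neq 0$, so $CH^2BH$ also contains all products $c_1(\chi)c_1(\chi')$ of Chern classes of characters, and one must show these extra classes still cannot express $\alpha|_H/p$. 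This is exactly what the explicit choice of $H_2$ is engineered to guarantee, and carrying it out requires the detailed mod $p$ cohomology and Chern-class structure of $Bp_{+}^{1+2}$, together with an analysis of the characters of $H$ arising from $H_2$, in order to certify that the class $2(c_2-c_2')|_H$ remains nonzero and non-$p$-divisible modulo torsion while all Chern classes on $BH$ fail to reach it.

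Assembling the pieces: for each $K$ with $H\subseteq K\subseteq G$ the class $[\alpha|_K]$ lies in $CH^2BK/p$, is killed by $cl_K$ by (I), and is nonzero by (II) via restriction to $H$; hence $cl_K$ is not injective.
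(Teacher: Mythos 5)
Your overall architecture is exactly the paper's reduction: produce one class on $BG$ that dies under mod $p$ reduction, and deduce non\nobreakdash-injectivity for every intermediate $K$ by showing its restriction to $H$ is not divisible by $p$ inside the Chern subgroup. Your class is also essentially the paper's: writing $x_4$ for the integral class with $\Delta^*(x_4)=0$ and $\Gamma_2^*(x_4)=c_2(\lambda_1)$, one computes $\Delta^*(\alpha)=c_2(\mathrm{Ad}_{PU(p)})-c_2(\mathrm{Ad}_{PU(p)})=0$ and $\Gamma_2^*(\alpha)=0-2p\,c_2(\lambda_1)$, so by Proposition~\ref{proposition:3.4} we get $\alpha=-2p\,x_4=-2\,c_2(\lambda'')$, a unit multiple mod $p$ of the paper's element. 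In particular your step (I) is correct (and needs no torus--lattice argument; it is immediate from Proposition~\ref{proposition:3.4}).

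The genuine gap is step (II): you state it, identify it as the main obstacle, and then do not prove it --- but (II) \emph{is} the theorem. Everything in Sections~\ref{section:4}--\ref{section:6} of the paper exists to establish it: the Serre spectral sequence of $BH\to B(A_2\times\tilde{\pi}(H_2))$ must be computed through total degree $4$ (Proposition~\ref{proposition:6.1}), which in turn requires the specific multiplicative facts $\iota^*(u_2)v_1\not=0$, $\iota^*(u_2)^2\not=0$ and $\iota^*(u_2)w_1=0$ about $B\tilde{\pi}(H_2)$ (Propositions~\ref{proposition:5.1} and \ref{proposition:5.2}); then one shows $f^*(\rho(x_4))$ represents $\alpha x_1y_1z_2$ in $E_\infty^{2,2}$ and is not a Bockstein image, so $p f^*(x_4)\not=0$ (Proposition~\ref{proposition:6.2}); and finally divisibility in the Chern subgroup is excluded by the Milnor operation: any class pulled back from $\mathbb{Z}\times BU$ is killed by $Q_1$ since $H^{odd}(\mathbb{Z}\times BU)=\{0\}$, whereas a hypothetical $\mu^*(y_4)$ with $p(\mu^*(y_4)-f^*(x_4))=0$ is shown to satisfy $Q_1\rho(\mu^*(y_4))\not=0$ after restriction to one of the elementary abelian subgroups $A_3$, $A_3'$ (Proposition~\ref{proposition:6.4}). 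Your sketched substitute --- central-character/dimension counting plus an unspecified ``analysis of the characters of $H$'' --- is not an argument: dimension divisibility does not control $c_2$ mod $p$, the degree-$4$ Chern subgroup of $H^4(BH;\mathbb{Z})$ also contains products $c_1(\chi)c_1(\chi')$ which your outline does not touch, and no cohomological mechanism is proposed that distinguishes $\alpha|_H/p$ from Chern classes. Moreover, Remark~\ref{remark:6.3} shows the statement you need is \emph{false} when $H_2$ is replaced by $p_{+}^{1+2}$, so any correct proof of (II) must isolate and exploit a property special to $H_2$ (this is precisely what Propositions~\ref{proposition:5.1} and \ref{proposition:5.2} do); a proof template that never identifies such a property cannot close. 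As it stands, the proposal reduces the theorem to its hardest step and stops there.
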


The order of the group $p_{+}^{1+2} \times H_2/\langle\Delta(\xi)\rangle$ is 
$p^{p+3}$ and it is the group $H$ in 
Theorem~\ref{theorem:1.1}.
Applying Theorem~\ref{theorem:1.2} to
\[
K=p_{+}^{1+2}\times \left( (\mathbb{Z}/p^2)^{p-1} \rtimes 
\mathbb{Z}/p\right) /\langle\Delta(\xi)\rangle, 
\]
we obtain Totaro's example in \cite[Section 15]{totaro-2014}.
Thus our result not only gives a smaller group whose mod $p$ cycle map is 
not injective but it extends Totaro's result.
For $p=2$,  Theorem~\ref{theorem:1.1} was proved by Totaro in 
\cite[Theorem 15.13]{totaro-2014}.
For $p=2$, the finite group $H$ is the extraspecial $2$-group 
$2_{+}^{1+4}$ of order $2^5$.
It is not difficult to see 
that we cannot replace $H_2$ by the extraspecial $p$-group $p_{+}^{1+2}$
in Theorem~\ref{theorem:1.2}. See Remark~\ref{remark:6.3}.
This observation leads us to the following conjecture:

\begin{conjecture}\label{conjecture:1.3}
Let $p$ be a prime number. 
For finite $p$-group $K$ of order less than $p^{p+3}$,
the mod $p$ cycle map
$
cl:CH^2BK/p\to H^4(BK)
$
is injective.
\end{conjecture}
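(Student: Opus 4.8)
The plan is to recast the conjecture as a \emph{purity} statement for the Chern subgroup. As explained just before the conjecture, $CH^2BK$ is realized inside $H^4(BK;\mathbb{Z})$ as the subgroup generated by Chern classes, and the mod $p$ cycle map is the composite of this inclusion with the reduction $\rho\colon H^4(BK;\mathbb{Z})\to H^4(BK)$. By the universal coefficient theorem, $\rho$ factors as the surjection $H^4(BK;\mathbb{Z})\twoheadrightarrow H^4(BK;\mathbb{Z})\otimes\mathbb{Z}/p$ followed by an injection, so $\ker\rho=pH^4(BK;\mathbb{Z})$. Hence $cl$ is injective if and only if
\[
CH^2BK\cap pH^4(BK;\mathbb{Z})=p\,CH^2BK.
\]
The goal is therefore to prove, for every finite $p$-group $K$ with $|K|<p^{p+3}$, that a Chern class cannot be divisible by $p$ in integral cohomology unless it is already $p$ times a Chern class.

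First I would split $CH^2BK$ according to the origin of its generators. The generators of the form $c_1(L)c_1(L')$, products of first Chern classes of one-dimensional representations, are pulled back along $K\to K^{\mathrm{ab}}$ and so lie in the image of inflation from the abelianization; on this part the question reduces to the abelian case, where the Chow ring and the cohomology of $BA$ for a finite abelian $p$-group $A$ are completely understood and the cycle map is injective. The remaining generators are second Chern classes $c_2(\rho)$ of higher-dimensional irreducible representations, and this is exactly the mechanism exhibited in Section~\ref{section:2} through the $p$-dimensional representation of $SU(p)$. I would show that a nonzero kernel element must genuinely involve such a $c_2(\rho)$, that the relation expressing its $p$-divisibility couples $c_2(\rho)$ to a Bockstein class, and hence that the defect is detected by $p$-torsion in $H^5(BK;\mathbb{Z})$. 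In particular the representation $\rho$ must be faithful of degree at least $p$.

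The last step is to convert these structural constraints into the order bound. A faithful irreducible representation of degree $p$ of a $p$-group already forces order at least $p^3$ (as for $p_{+}^{1+2}$), and the coupling to the Bockstein—which, by Remark~\ref{remark:6.3}, cannot be supplied by a second merely extraspecial factor—should force a second, genuinely larger constituent of order at least $p^{p+1}$; identifying the two central $\mathbb{Z}/p$'s as in the quotient by $\langle\Delta(\xi)\rangle$ then yields $|K|\ge p^3\cdot p^{p+1}/p=p^{p+3}$. The main obstacle, and the reason the statement is only conjectural, is establishing that these are the \emph{only} ways to produce a kernel element, \emph{uniformly in $p$}: there is no general descent principle controlling $\ker(cl)$ under passage to subgroups or quotients, so one must argue directly that a minimal counterexample has precisely the center, commutator, and faithful-degree structure described above, and that the quantitative degree-$p$ input (tied to $SU(p)$, hence responsible for the exponent $p$ in $p^{p+3}$) cannot be realized in any smaller $p$-group.
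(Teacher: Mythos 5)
The statement you are addressing is Conjecture~\ref{conjecture:1.3}: the paper offers \emph{no proof} of it, and none is known, so there is no argument of the author's to compare yours against. Your proposal is correctly framed at the start --- since $\ker\rho = p\,H^4(BK;\mathbb{Z})$, injectivity of $cl$ is indeed equivalent to the purity statement $CH^2BK\cap pH^4(BK;\mathbb{Z})=p\,CH^2BK$, and this matches how the paper itself treats $CH^2$ (as the Chern subgroup, with $cl$ induced by $\rho$). But from that point on the decisive steps are asserted rather than proved, and you acknowledge as much in your final paragraph, so what you have is a research plan, not a proof.

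Two gaps are concrete and fatal as written. First, the splitting of $CH^2BK$ into ``inflated products $c_1(L)c_1(L')$'' and ``genuine $c_2(\rho)$'' does not interact with divisibility: an element of the kernel is a sum of both kinds of generators, and $p$-divisibility of the sum in $H^4(BK;\mathbb{Z})$ says nothing about either summand separately, so the claimed reduction of one part to the abelian case, and the claim that a kernel element ``must genuinely involve'' a $c_2$ of a faithful irreducible of degree at least $p$, have no supporting mechanism. (Note also that in the paper's own example the detecting class is $c_2(\lambda'')$ for a \emph{virtual} representation of dimension zero built from a $p^2$-dimensional one; nothing ties the obstruction to a degree-$p$ faithful irreducible.) Second, your order bound $|K|\ge p^3\cdot p^{p+1}/p=p^{p+3}$ simply recomputes the order of the paper's group $H=p_{+}^{1+2}\times H_2/\langle\Delta(\xi)\rangle$; it is not a lower bound over all $p$-groups, because nothing forces a minimal counterexample to be a central product with one extraspecial constituent and one constituent of order $p^{p+1}$. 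Remark~\ref{remark:6.3} shows only that the \emph{specific} substitution of $H_2$ by $p_{+}^{1+2}$ fails (for odd $p$; for $p=2$ the extraspecial group $2_{+}^{1+4}$ \emph{does} work, per Theorem 15.13 of Totaro cited in Section~\ref{section:1}, which already strains your uniform structural picture). Since, as you correctly note, $\ker(cl)$ admits no general descent under subgroups or quotients, the minimality argument you would need --- that every group of order below $p^{p+3}$ has pure Chern subgroup in degree $4$ --- remains entirely open, which is precisely why the paper states this as a conjecture.
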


This paper is organized as follows:
In Section~\ref{section:2}, we define groups 
that we use in this paper including $G$, $H$ above.
In Section~\ref{section:3}, we recall the cohomology of the classifying space of 
the projective unitary group $PU(p)$ up to degree $5$.
In Section~\ref{section:3}, we prove that the mod $p$ cycle map 
$CH^2BG/p\to H^4(BG)$ is not injective and describe its kernel.
In Section~\ref{section:4}, we collect some properties of the mod $p$ cohomology of 
$B\tilde{\pi}(H_2)$, where $\tilde{\pi}$ is the restriction of the projection from $SU(p)$ to $PU(p)$.
We use the mod $p$  cohomology of $B\tilde{\pi}(H_2)$ in Section~\ref{section:5}. 
In Section~\ref{section:5}, we study the mod $p$ cycle map $CH^2BH/p\to H^4(BH)$ 
 to complete the proof of Theorem~\ref{theorem:1.2}.

Throughout the rest of this paper, by abuse of notation, 
we denote the map between classifying spaces
 induced by a group homomorphism $f:G\to G'$ by $f:BG \to BG'$.

The author would like to thank the referee not only for pointing our several errors, 
kind advice and helpful comments
but also for his/her patience.

\section{Subgroups and quotient groups}\label{section:2} 

In this section, we define subgroups of the unitary group 
$U(p)$ and of the product $SU(p) \times SU(p)$ of special unitary group  $SU(p)$.
We also define their quotient groups. 
For a finite subset $\{ x_1, \dots, x_r\}$ of a group, we denote by $\langle 
x_1, \dots, x_r\rangle$ the subgroup generated by $\{x_1, \dots, x_r\}$.
As we already mentioned, we assume that $p$ is an odd prime number.

We start with subgroups of the special unitary group $SU(p)$.
Let $\xi =\exp(2\pi i/p)$, $\omega=\exp(2\pi i/p^2)$
and
$\delta_{ij}=1$ if $i\equiv j \mod p$, $\delta_{ij}=0$ if $i \not \equiv j \mod p$.
We consider the following matrices in $SU(p)$.
\begin{align*}
\xi 
& 
=(\xi \delta_{ij})=\mathrm{diag}(\xi, \dots, \xi),
\\
\alpha 
&
=(\xi^{i-1} \delta_{ij}) =\mathrm{diag}(1, \xi, \dots, \xi^{p-1}),
\\
\beta
&
=(\delta_{i,j-1}), 
\\
\sigma_1
&
= \mathrm{diag}(\omega \xi^{p-1}, \omega, \dots, \omega).
\end{align*}
Moreover, let $\sigma_k$ be the diagonal matrix 
whose $(i,i)$-entry is $\omega \xi^{p-1}$ for $i=k$, 
$\omega$ for $i\not=k$.
Let us consider the following subgroups of $SU(p)$:
\begin{align*}
p_{+}^{1+2} & = \langle \alpha, \beta, \xi \rangle, \\
H_2&=\langle \beta, \sigma_1, \dots, \sigma_p \rangle.
\end{align*}
The group $p_{+}^{1+2}$ is the extraspecial $p$-group 
of order $p^3$ with exponent $p$.
Since $\sigma_1^p=\cdots = \sigma_p^p=\xi$ and since
\[
\sigma_2\sigma_3^2\cdots \sigma_{p}^{p-1}=\xi^{(p-1)/2} \alpha^{-1},
\]
the group $H_2$ contains $p_{+}^{1+2}$ as a subgroup.
An element in the subgroup of $H_2$ generated by $\sigma_1, \dots, \sigma_p$
could be described as 
\[
\omega^j \mathrm{diag}( \xi^{i_1}, \dots, \xi^{i_p}),
\]
 where
$0\leq j \leq p-1$,
$0\leq i_1 \leq p-1$, \dots, 
$0\leq i_p\leq p-1$ and $i_1+\cdots+i_p$ is divisible by $p$. So, the order of this subgroup is $p^{p}$.
Since $\beta$ acts on the subgroup of diagonal matrices as a cyclic permutation, 
 the order of $H_2$ is $p^{p+1}$. 

We write $A_2$ for the quotient group $p_{+}^{1+2}/\langle \xi \rangle$.
The group $A_2$ is an elementary abelian $p$-group of rank $2$.
We denote by $\tilde{\pi}$ the obvious projection $SU(p)\to PU(p)$ and projections induced by 
this projection, e.g, $\tilde{\pi}:p_{+}^{1+2}\to \tilde{\pi}(p_{+}^{1+2})=A_2$.
We denote the obvious inclusions among $p_{+}^{1+2}$, $H_2$ and $SU(p)$
and among $A_2$, $\tilde{\pi}(H_2)$, $PU(p)$ 
by $\iota$.

Let us consider the following maps:
\begin{align*}
\Delta:SU(p) \to SU(p)\times SU(p), \quad m \mapsto 
\begin{pmatrix} m & 0 \\ 0 & m\end{pmatrix}.
\\
\Gamma_1:SU(p) \to SU(p)\times SU(p), \quad m \mapsto 
\begin{pmatrix}m  & 0 \\ 0 & I \end{pmatrix}.
\\
\Gamma_2:SU(p) \to SU(p)\times SU(p), \quad m \mapsto 
\begin{pmatrix} I & 0 \\ 0 & m \end{pmatrix}.
\end{align*}
Using these maps and matrices in $SU(p)$ above, 
we consider the following groups.
\begin{align*}
G
& 
=
SU(p)\times SU(p)/\langle \Delta(\xi) \rangle, 
\\
H
&
=
\langle 
\Delta(\alpha),
 \Delta(\beta), 
 \Delta(\xi), \Gamma_2(\beta),
\Gamma_2(\sigma_1), \dots, 
\Gamma_2(\sigma_p)
\rangle
/
\langle
\Delta(\xi) \rangle, 
\\
A_3
&
=
\langle 
\Delta(\alpha),
 \Delta(\beta), 
 \Delta(\xi), 
\Gamma_2(\xi)
\rangle
/
\langle
\Delta(\xi) \rangle, 
\\
A_3'
&
=
\langle 
\Gamma_1(\alpha),
\Gamma_2(\beta), 
 \Delta(\xi), 
\Gamma_2(\xi)
\rangle
/
\langle
\Delta(\xi) \rangle.
\end{align*}
Since $\alpha$ and $\beta$ are in $H_2$, the subgroup 
\[
\langle 
\Delta(\alpha),
 \Delta(\beta), 
 \Delta(\xi), \Gamma_2(\beta),
\Gamma_2(\sigma_1), \dots, 
\Gamma_2(\sigma_p)
\rangle
\]
 contains 
\[
\Gamma_1(\alpha)=\Delta(\alpha)\Gamma_2(\alpha^{-1}), 
\quad
\Gamma_1(\beta)=\Delta(\beta)\Gamma_2(\beta^{-1}),  
\quad
\Gamma_1(\xi)=\Delta(\xi)\Gamma_2(\xi^{-1}).
\]
Therefore, it is equal to the subgroup 
\[
p_{+}^{1+2} \times H_2=
\langle 
\Gamma_1(\alpha),
 \Gamma_1(\beta), 
 \Gamma_1(\xi), \Gamma_2(\beta),
\Gamma_2(\sigma_1), \dots, 
\Gamma_2(\sigma_p)
\rangle.
\]
Hence, we have
\[
H=p_{+}^{1+2} \times H_2/\langle \Delta(\xi)\rangle.
\]
We denote the obvious inclusion of $H$ by $f:H\to G$.
It is also clear that $A_3$, $A_3'$ are elementary abelian $p$-subgroups of rank $3$.
We use the elementary abelian $p$-subgroup $A_3'$ only in the proof of Proposition~\ref{proposition:6.4}. 
In the above groups, $\Gamma_1(\xi)=\Gamma_2(\xi)$.
We denote by $\pi$ the obvious projections induced by $\pi:G\to PU(p)\times PU(p)$.
It is clear that 
\[
\pi(H)=H/\langle \Gamma_2(\xi)\rangle = A_2\times \tilde{\pi}(H_2)
\] 
and 
\[
PU(p)\times PU(p)=SU(p)\times SU(p)/\langle \Delta(\xi), \Gamma_2(\xi) \rangle.
\]
Moreover, we have the following commutative diagram:
\[
\begin{diagram}
\node{A_3} \arrow{e,t}{g}  \arrow{s,l}{\varphi} 
\node{H}  \arrow{s,r}{\pi}
\node{A_3'} \arrow{s,r}{\varphi'}\arrow{w,t}{g'}
\\
\node{A_2} \arrow{e,t}{g}
\node{A_2\times \tilde{\pi}(H_2)} 
\node{A_2.}\arrow{w,t}{g'}
\end{diagram}
\]
where  upper $g, g'$ are obvious inclusions, 
$A_2=\langle \tilde{\pi}(\alpha), \tilde{\pi}(\beta) \rangle$, 
\[
\begin{array}{ll}
\varphi(\Delta(\alpha))=\tilde{\pi}(\alpha), & \varphi(\Delta(\beta))=\tilde{\pi}(\beta), 
\\[.2cm]
\varphi'(\Gamma_1(\alpha))=\tilde{\pi}(\alpha), & \varphi'(\Gamma_2(\beta))=\tilde{\pi}(\beta), 
\\[.2cm]
g(\tilde{\pi}(\alpha))=(\tilde{\pi}(\alpha), \tilde{\pi}(\alpha)), &g(\tilde{\pi}
(\beta))=(\tilde{\pi}(\beta), \tilde{\pi}(\beta)), 
\\[.2cm]
g'(\tilde{\pi}(\alpha))=(\tilde{\pi}(\alpha), 1), & g'(\tilde{\pi}(\beta))=(1, \tilde{\pi}(\beta)).
\end{array}
\]

We end this section by considering another subgroup 
$H_2'$ of the unitary group $U(p)$ and its quotient group 
$\tilde{\pi}(H_2')$, which is a subgroup of $PU(p)$.
We use $H_2'$ and $\tilde{\pi}(H_2')$  only in the proof of 
Proposition~\ref{proposition:3.4}.
Let
$T^{p}$ be the set of all diagonal matrices in $U(p)$, 
which is a maximal torus of $U(p)$.
We define $H_2'=T^{p} \rtimes \mathbb{Z}/p$ as the subgroup generated 
by $T^{p}$ and $\beta$.
It is clear that $\tilde{\pi}(H_2)$ is a subgroup of $\tilde{\pi}'(H_2') 
\subset PU(p)$, where we denote by $\tilde{\pi}'$ the obvious projection $U(p) \to PU(p)$.

\section{The cohomology of $BPU(p)$}\label{section:3}

In this section, we recall the integral and mod $p$ cohomology of $BPU(p)$. 
Throughout  the rest of this paper, 
we denote the integral cohomology of a space $X$ by $H^{*}(X;\mathbb{Z})$ and its mod $p$ cohomology by $H^{*}(X)$. Also, we denote the mod $p$ reduction 
by \[
\rho:H^{*}(X;\mathbb{Z})\to H^{*}(X).
\]
We also define generators 
$u_2\in H^2(BPU(p))$ and $z_1 \in H^1(B\langle \xi \rangle)$, so that
$d_2(z_1)=x_1y_1$, $d_2(z_1)=u_2$ and $\iota^*(u_2)=x_1y_1$, where
$x_1, y_1 \in H^1(BA_2)$ are generators corresponding to 
$\alpha, \beta$ in $\pi_1(BA_2)=\langle \tilde{\pi}(\alpha), \tilde{\pi}(\beta)\rangle$, $d_2$'s are differentials 
in the Leray-Serre spectral sequence associated with the vertical fibrations 
$\tilde{\pi}$'s in
\begin{equation}
\label{equation:3.1}
\begin{diagram}
\node{Bp_{+}^{1+2}} \arrow{e,t}{\iota} \arrow{s,l}{\tilde{\pi}} 
\node{BSU(p)} \arrow{s,r}{\tilde{\pi}}
\\
\node{BA_2} \arrow{e,t}{\iota} 
\node{BPU(p),}
\end{diagram}
\end{equation}
where vertical maps are induced by the obvious projections and 
horizontal maps are induced by the obvious inclusions.

First, we set up notations related with the spectral sequence.
Let 
\[
\pi:X \to B
\]
a fibration.
Since the base space $B$ is usually clear from the context, 
we write $E_r^{s,t}(X)$ for the Leray-Serre spectral sequence 
associated with the above fibration converging to
the mod $p$ cohomology $H^*(X)$.
If it is clear from the context, we write $E_r^{s,t}$ for the Leray-Serre spectral sequence.
We denote by 
\[
H^{s+t} (X) =F^0 H^{s+t}(X) \supseteq F^1 H^{s+t}(X) \supseteq \cdots 
\supseteq F^{s+t}H^{s+t}(X) =\{0\}
\] the filtration on $H^{s+t}(X)$ 
associated with the spectral sequence.
Unless otherwise stated explicitly, by abuse of notation, we denote
the cohomology class and the element it represents in the spectral sequence 
by the same symbol.
Usually, it is clear from the context whether we deal with the cohomology class 
or the element in
the spectral sequence.
Let $R$ be an algebra or a graded algebra. Let $\{ x_1, \dots, x_r\}$ be a finite set. 
We denote by $R\{x_1, \dots, x_r\}$ the free $R$-module spanned by $\{ x_1, \dots, x_r\}$.
For a graded module $M$, we say $M$ is a free $R$-module up to degree $m$ if 
the $R$-module homomorphism 
\[
f:(R\{ x_1, \dots, x_r\})^i \to M^i
\] is  an isomorphism for $i\leq m$ where 
$\{ x_1, \dots, x_r\}$ is a finite subset of $M$.
We say a spectral sequence collapses at the 
$E_r$-level up to degree $m$ if  $E_r^{s,t}=E_\infty^{s,t}$ for $s+t\leq m$.

Next, we recall the integral and mod $p$ cohomology of $BPU(p)$. 
The mod $3$ cohomology of $BPU(3)$ was computed 
by Kono, Mimura and Shimada in \cite{kono-mimura-shimada-1975}.
The integral and mod $p$ cohomology of $BPU(p)$ was computed 
by Vistoli in \cite {vistoli-2007}.
The mod $p$ cohomology was computed by the author and Yagita in 
\cite{kameko-yagita-2008} independently.
The computation up to degree $5$ was also done 
by Antieau and Williams in \cite{antieau-williams-2014}.
Although the direct computation is not difficult, 
we prove the following proposition by direct computation 
because it is slightly different from the one in \cite{antieau-williams-2014}.

\begin{proposition}
\label{proposition:3.2}
Up to degree $5$, the integral cohomology of $BPU(p)$ is given by 
\[
\begin{array}{rcll}
H^i(BPU(p); \mathbb{Z})&=& \{0\} & \mbox{for $i=1, 2, 5$, }
\\
H^i(BPU(p); \mathbb{Z})&=& \mathbb{Z}/p & \mbox{for $i=3$,}
\\
H^i(BPU(p); \mathbb{Z})&=& \mathbb{Z} & \mbox{for $i=4$.}
\end{array}
\]
Up to degree $5$, the mod $p$ cohomology of $BPU(p)$ is 
given by
\[
\begin{array}{rcll}
H^i(BPU(p))&=& \{0\} & \mbox{for $i=1, 5$, }
\\
H^i(BPU(p))&=& \mathbb{Z}/p & \mbox{for $i=0,2, 3, 4$.}
\end{array}
\]
\end{proposition}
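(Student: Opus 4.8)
The plan is to run the Leray--Serre spectral sequences, with $\mathbb{Z}$ and with $\mathbb{F}_p$ coefficients, of the fibration
\[
B\langle\xi\rangle \to BSU(p) \xrightarrow{\ \tilde\pi\ } BPU(p)
\]
coming from the central extension $1\to\langle\xi\rangle\to SU(p)\to PU(p)\to 1$, i.e.\ the right-hand vertical map of diagram~\eqref{equation:3.1}. Here $\langle\xi\rangle\cong\mathbb{Z}/p$, the fibre cohomology is $H^*(B\langle\xi\rangle)=\Lambda(z_1)\otimes\mathbb{F}_p[z_2]$ with $z_2=\beta z_1$, and the total space $BSU(p)$ is $3$-connected with $H^*(BSU(p);\mathbb{Z})=\mathbb{Z}[c_2,\dots,c_p]$, so up to degree $5$ it is $\mathbb{Z}$ in degrees $0$ and $4$ and trivial otherwise. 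Since $\pi_1(BPU(p))=\pi_0(PU(p))=0$, the base is simply connected and the coefficient systems are untwisted. The transgressions are forced: $d_2(z_1)=u_2$ is given, and by Kudo's transgression theorem $d_3(z_2)=\beta(u_2)=:u_3$; everything else propagates by the Leibniz rule.

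First I would treat degrees $\le 3$, where the computation is rigid. Demanding that $E_\infty$ agree with $H^*(BSU(p))$ in these degrees forces, step by step: $H^1(BPU(p))=0$; that $z_1$ must die, whence $u_2\ne0$ and $H^2(BPU(p))=\langle u_2\rangle$; that $z_2$ must die under $d_3$, whence $u_3\ne0$ and $H^3(BPU(p))=\langle u_3\rangle$; and that $u_2z_1$ must die under $d_2$, whence $u_2^2\ne0$. Running the integral spectral sequence in parallel, the order-$p$ class $y\in H^2(B\langle\xi\rangle;\mathbb{Z})$ is transgressive and must die, so $d_3(y)$ generates a $\mathbb{Z}/p$ in $H^3(BPU(p);\mathbb{Z})$; this yields $H^1(BPU(p);\mathbb{Z})=H^2(BPU(p);\mathbb{Z})=0$ and $H^3(BPU(p);\mathbb{Z})=\mathbb{Z}/p$.

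The crux is degrees $4$ and $5$, and here the spectral sequence alone is \emph{not} decisive. The only surviving indeterminacy is the graded piece $E_\infty^{2,2}$ of the integral spectral sequence, which is a subquotient of $E_2^{2,2}=H^2(BPU(p);\mathbb{Z}/p)\cong\mathbb{Z}/p$, hence equals $0$ or $\mathbb{Z}/p$. This is equivalent to the index of the edge homomorphism $\pi^*\colon H^4(BPU(p);\mathbb{Z})\to H^4(BSU(p);\mathbb{Z})=\mathbb{Z}\{c_2\}$ being $1$ or $p$, and equivalent mod $p$ to whether $u_2u_3=\tfrac12\beta(u_2^2)$ vanishes. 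Both alternatives are consistent with the total space being $\mathbb{Z}\{c_2\}$ in degree $4$ and $0$ in degree $5$: index $1$ gives $H^5(BPU(p);\mathbb{Z})=\mathbb{Z}/p$, $H^4(BPU(p))\cong\mathbb{F}_p^{2}$ and $H^5(BPU(p))\cong\mathbb{F}_p$, while index $p$ gives exactly the asserted answer. In either case $H^4(BPU(p);\mathbb{Z})\cong\mathbb{Z}$.

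To settle the index I would restrict to the maximal torus $\bar T=T/\langle\xi\rangle$ of $PU(p)$, where $T\subset SU(p)$ is the diagonal torus. Because $H^4(BPU(p);\mathbb{Z})\cong\mathbb{Z}$ is torsion free, restriction gives an injection $H^4(BPU(p);\mathbb{Z})\hookrightarrow H^4(B\bar T;\mathbb{Z})^{W}=(\mathrm{Sym}^2 X^*(\bar T))^{W}$ with $W=S_p$, and $X^*(\bar T)=Q$ is the root lattice of $SU(p)$, sitting at index $p$ inside the weight lattice $P=X^*(T)$; under these identifications $c_2$ corresponds to $e_2\in\mathrm{Sym}^2 P$, and $H^4(BSU(p);\mathbb{Z})$ injects into $\mathrm{Sym}^2 P$ as well. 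Since the spectral sequence already forces the index to be $1$ or $p$, it suffices to rule out $1$, i.e.\ to show $e_2\notin\mathrm{Sym}^2 Q$ (the injective image in $\mathrm{Sym}^2 P$); this is a finite lattice check, for which the identity $\sum_{i<j}(x_i-x_j)^2=-2p\,e_2\in\mathrm{Sym}^2 Q$ already shows that $p$ divides the index. Hence the index is exactly $p$, so $E_\infty^{2,2}=\mathbb{Z}/p$ integrally and $u_2u_3=0$ mod $p$. With this single input the two spectral sequences close up: integrally $H^4(BPU(p);\mathbb{Z})=\mathbb{Z}$ and $H^5(BPU(p);\mathbb{Z})=0$, and mod $p$ one gets $H^4(BPU(p))=\langle u_2^2\rangle\cong\mathbb{F}_p$ and $H^5(BPU(p))=0$. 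I expect this determination of the free part of $H^4$ — the one non-formal step — to be the only real obstacle; as a cross-check the adjoint representation of $PU(p)$ satisfies $\pi^*c_2(\mathrm{ad})=2p\,c_2$, again confirming that $p$ divides the index.
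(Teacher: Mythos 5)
Your route is genuinely different from the paper's: the paper runs the integral Leray--Serre spectral sequence of $BU(p)\to BPU(p)\to K(\mathbb{Z},3)$ and settles the one non-formal coefficient (that $d_3(c_2)=\alpha_2c_1u_3$ with $\alpha_2$ prime to $p$) by invoking Baum--Browder's computation of $H^*(PU(p))$, whereas you use the extension fibration $B\langle\xi\rangle\to BSU(p)\to BPU(p)$ and restriction to the maximal torus. Your formal part is sound: the degree $\le 3$ analysis, the reduction of degrees $4$ and $5$ to whether the edge map $\pi^*\colon H^4(BPU(p);\mathbb{Z})\to H^4(BSU(p);\mathbb{Z})=\mathbb{Z}\{c_2\}$ has index $1$ or $p$, and the further reduction to whether $e_2$ lies in the image of $\mathrm{Sym}^2Q$ are all correct. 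But the decisive step fails as written. The identity $\sum_{i<j}(x_i-x_j)^2=-2p\,e_2$ proves that $2p\,e_2\in\mathrm{Sym}^2Q$, i.e.\ it bounds the index from \emph{above} (it divides $2p$); it cannot prove $e_2\notin\mathrm{Sym}^2Q$, which is what ruling out index $1$ requires --- exhibiting a multiple of $e_2$ inside the lattice says nothing against membership of $e_2$ itself. Your closing cross-check has the same defect: $\pi^*c_2(\mathrm{ad})=2p\,c_2$ again only shows the index divides $2p$. So the single non-formal input of the entire computation --- the lower bound $p\mid[\mathbb{Z}\{c_2\}:\pi^*H^4(BPU(p);\mathbb{Z})]$, equivalently $u_2u_3=0$ bmod $p$, equivalently $H^5(BPU(p);\mathbb{Z})=0$ --- is left unproved.

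The gap is fixable inside your framework. Every element of $\mathrm{Sym}^2Q=\mathrm{Sym}^2X^*(\bar T)$ takes integer values on the cocharacter lattice $Q^\vee=X_*(\bar T)$, since a product $a\cdot b$ evaluates as $v\mapsto a(v)b(v)$ with $a(v),b(v)\in\mathbb{Z}$. Take
\[
v=\tfrac1p\bigl((p-1),-1,\dots,-1\bigr),
\]
which pairs integrally with every root $x_i-x_j$ and hence lies in $Q^\vee$, although it is not a cocharacter of $T$. Then $e_2(v)=\sum_{i<j}v_iv_j=-\tfrac{p-1}{2p}$, which is not an integer for odd $p$; hence $e_2\notin\mathrm{Sym}^2Q$, and more precisely $n\,e_2\in\mathrm{Sym}^2Q$ forces $p\mid n\cdot\tfrac{p-1}{2}$, i.e.\ $p\mid n$. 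This gives the index exactly $p$, after which both of your spectral sequences close up as you describe. With this repair your argument is a valid alternative to the paper's, trading the topological input (absence of primitives in $H^3(PU(p))$) for an elementary root-lattice evaluation.
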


\begin{proof}
Consider the Leray-Serre spectral sequence associated with 
\[
BU(p) \to BPU(p) \to K(\mathbb{Z}, 3)
\]
converging to $H^{*}(BPU(p); \mathbb{Z})$.
The integral cohomology of $BU(p)$ is a polynomial algebra 
generated by Chern classes, that is, 
$H^{*}(BU(p); \mathbb{Z})=\mathbb{Z}[c_1, \dots, c_p]$, where $\deg c_i=2i$.
The integral cohomology $H^i(K(\mathbb{Z},3); \mathbb{Z})$
of the Eilenberg-MacLane space $K(\mathbb{Z},3)$ is 
$\mathbb{Z}$ for $i=0,3$, $\{ 0\}$ for $i=1,2,4,5$. 
We fix a generator $u_3$ of $H^3(K(\mathbb{Z},3);\mathbb{Z})$.
Up to degree $5$, only non-trivial $E_2$-terms are
\[
E_{2}^{0,0}=E_2^{0,2}=\mathbb{Z}, \quad E_{2}^{0,4}=\mathbb{Z}\oplus \mathbb{Z}
\]
and
\[
E_2^{3,0}=E_2^{3,2}=\mathbb{Z}.
\]
Hence, up to degree $5$, the only non-trivial differential is $d_3:E_3^{0,t}\to E_3^{3,t-2}$.
The differential are given by
\[
d_3(c_1)=\alpha_1 u_3, \quad d_3(c_2)=\alpha_2 c_1u_3,
\]
where $\alpha_1, \alpha_2$ in $\mathbb{Z}$.
Since $BPU(p)$ is simply connected and since $\pi_2(BPU(p))=\mathbb{Z}/p$, 
by the Hurewicz theorem, we have $H_1(BPU(p);\mathbb{Z})=\{0\}$ and 
$H_2(BPU(p);\mathbb{Z})=\mathbb{Z}/p$.
By the universal coefficient theorem, we have
that $H^2(BPU(p);\mathbb{Z})=\{0\}$  and that 
 $H^3(BPU(p);\mathbb{Z})$ has $\mathbb{Z}/p$ as a direct summand.
 Therefore, 
$\alpha_1$ must be $\pm p$ and  $E_3^{3,0}=\mathbb{Z}/p$.
The cohomology suspension $\sigma:H^4(BU(p))\to H^3(U(p))$ maps $\rho(c_2)$ 
to a non-trivial primitive element in 
$H^{3}(U(p))$, 
but there exists no primitive element in $H^3(PU(p))$
by the computation due to Baum and Browder in \cite{baum-browder-1965}.
Hence,  in 
the Leray-Serre spectral sequence $E_r^{s,t}(BSU(p))$, 
the element $\rho(c_2) \in E_{2}^{0,4}(BSU(p))$ must support 
a non-trivial differential. Therefore,  $\alpha_2$ is not divisible by $p$ and
up to degree $5$, 
the non-trivial $E_3$-terms are
\[
E_{3}^{0,0}=E_{3}^{0,4}=\mathbb{Z}, 
\quad
E_{3}^{3,0}=\mathbb{Z}/p.
\]
As for $E_r^{s,t}(BPU(p))$, 
we have 
\[
E_{2}^{0,0}(BPU(p))=E_2^{0,2}(BPU(p))=\mathbb{Z}/p, \quad 
E_{2}^{0,4}(BPU(p))=\mathbb{Z}/p\oplus \mathbb{Z}/p, 
\]
\[
E_2^{3,0}(BPU(p))=E_2^{3,2}(BPU(p))=\mathbb{Z}/p, 
\]
and
\[
d_3(\rho(c_1))=0, \quad d_3(\rho(c_2))= \rho(\alpha_2 c_1u_3)\not=0.
\]
So, we have the desired result.
\end{proof}

With the following proposition, we choose generators 
\[
z_1\in H^1(B\langle \xi\rangle), 
\quad 
u_2\in H^2(BPU(p))
\]
 such that 
 \[
 d_2(z_1)=u_2, \quad d_2(z_1)=x_1y_1
 \]
  in the spectral sequences  associated with vertical fibre bundles in 
  (\ref{equation:3.1}).

\begin{proposition}\label{proposition:3.3}
We may choose $u_2\in H^2(BPU(p))$ such that
the induced homomorphism $\iota^*:H^{2}(BPU(p))\to H^2(BA_2)$ 
maps $u_2$ to $x_1y_1$.
\end{proposition}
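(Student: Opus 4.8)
The plan is to deduce the statement from the naturality of the Leray--Serre spectral sequence with respect to the map of fibrations $\iota$ in~(\ref{equation:3.1}). Both vertical fibrations $\tilde{\pi}$ have the same fibre $B\langle\xi\rangle$, since $\langle\xi\rangle$ is the kernel of $\tilde{\pi}:p_{+}^{1+2}\to A_2$ as well as of $\tilde{\pi}:SU(p)\to PU(p)$. Moreover, the horizontal map $\iota$ restricts to the identity on the common fibre $B\langle\xi\rangle$, because the center of $p_{+}^{1+2}$ maps isomorphically onto $\langle\xi\rangle\subset SU(p)$. Consequently $\iota$ induces a morphism of spectral sequences from $E_r^{s,t}(BSU(p))$ to $E_r^{s,t}(Bp_{+}^{1+2})$ which is the identity on the fibre edge $H^*(B\langle\xi\rangle)$; in particular it fixes $z_1$ and commutes with $d_2$.

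First I would analyse the right-hand spectral sequence $E_r^{s,t}(BSU(p))$. Since $SU(p)$ is simply connected, $BSU(p)$ is $3$-connected, so $H^1(BSU(p))=H^2(BSU(p))=\{0\}$. Up to total degree $2$ the only nonzero $E_2$-terms are $E_2^{0,1}=H^1(B\langle\xi\rangle)=\mathbb{Z}/p$, spanned by $z_1$, together with $E_2^{0,2}$ and $E_2^{2,0}=H^2(BPU(p))=\mathbb{Z}/p$, the last by Proposition~\ref{proposition:3.2}. Because $H^1(BSU(p))=\{0\}$, the class $z_1$ cannot survive to $E_\infty$, and the only differential it can support is $d_2:E_2^{0,1}\to E_2^{2,0}$. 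Hence $d_2(z_1)\neq 0$, and I set $u_2=d_2(z_1)$, a generator of $H^2(BPU(p))=\mathbb{Z}/p$.

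Next I would identify the corresponding differential in the left-hand spectral sequence $E_r^{s,t}(Bp_{+}^{1+2})$. Here $d_2(z_1)\in H^2(BA_2)$ is the mod $p$ classifying class of the central extension $1\to\langle\xi\rangle\to p_{+}^{1+2}\to A_2\to 1$. For odd $p$ the group $H^2(BA_2)$ is spanned by $x_1y_1$ together with the Bockstein images of the one-dimensional classes $x_1$ and $y_1$. Since $p_{+}^{1+2}$ has exponent $p$, every element of $A_2$ lifts to an element of order $p$, which forces the components of $d_2(z_1)$ along these two Bockstein classes to vanish; the remaining $x_1y_1$-component is nonzero because the commutator $[\alpha,\beta]=\xi$ is nontrivial. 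After rescaling the generator $z_1$ by a unit, I arrange $d_2(z_1)=x_1y_1$.

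Finally, naturality gives the result at once: the morphism of spectral sequences fixes $z_1$ and commutes with $d_2$, so
\[
\iota^*(u_2)=\iota^*\bigl(d_2(z_1)\bigr)=d_2(z_1)=x_1y_1.
\]
The only step requiring genuine input is the identification of the left-hand $d_2(z_1)$ with the commutator form $x_1y_1$ rather than with a Bockstein class; this is exactly where the exponent-$p$ hypothesis on $p_{+}^{1+2}$ is used. The remaining steps are formal consequences of naturality and of the computation of $H^2(BPU(p))$ in Proposition~\ref{proposition:3.2}.
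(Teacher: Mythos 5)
Your proof is correct and takes essentially the same approach as the paper: both rest on naturality of the Leray--Serre spectral sequences of the two fibrations in (\ref{equation:3.1}), identify $d_2(z_1)$ in $E_2^{2,0}(Bp_{+}^{1+2})$ with the class $x_1y_1$ of the extension $\mathbb{Z}/p \to p_{+}^{1+2} \to A_2$, and define $u_2=d_2(z_1)$ in $E_2^{2,0}(BSU(p))$. The extra details you supply---the exponent-$p$ argument ruling out Bockstein components of the extension class, and the use of $H^1(BSU(p))=H^2(BSU(p))=\{0\}$ to force $d_2(z_1)\neq 0$ on the $BSU(p)$ side (the paper deduces this nontriviality from naturality instead)---only make explicit what the paper leaves implicit.
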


\begin{proof}
From the  commutative diagram (\ref{equation:3.1}),
there exists the induced homomorphism 
between the Leray-Serre spectral sequences 
\[
\iota^*:E_r^{s,t}(BSU(p)) \to E_r^{s,t}(Bp_+^{1+2}).
\]
Since the group extension 
\[
\mathbb{Z}/p \to p_{+}^{1+2} \to A_2
\]
corresponds to $x_1y_1$ in $H^2(BA_2)$, 
the differential $d_2:E_2^{0,1}(Bp_+^{1+2}) \to E_2^{2,0}(Bp_+^{1+2})$
is
given by 
\[
d_2(z_1)=x_1y_1
\]
for some $z_1\in H^1(B\langle \xi \rangle)=\mathbb{Z}/p[z_2] \otimes \Lambda(z_1)$.
Hence, 
\[
d_2:E_2^{0,1}(BSU(p)) \to E_2^{2,0}(BSU(p))
\]
is nontrivial and we may define $u_2$ by $d_2(z_1)$.
Hence, we have the desired result.
\end{proof}

We end this section by computing $H^4(BG;\mathbb{Z})$
for $G=SU(p)\times SU(p)/\langle \Delta(\xi)\rangle$.
The following computation was done by Totaro 
 in the proof of Theorem 15.4 in \cite{totaro-2014}.


\begin{proposition} \label{proposition:3.4}
Consider a homomorphism
\[
\psi:H^4(BG; \mathbb{Z})\to H^4(BPU(p); \mathbb{Z}) \oplus H^4(BSU(p); \mathbb{Z})
\]
sending $x$ to $(\Delta^*(x), \Gamma_2^*(x))$. 
It is an isomorphism.
\end{proposition}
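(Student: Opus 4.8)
The plan is to realise $H^{4}(BG;\mathbb{Z})$ as an explicit rank-two sublattice of $H^{4}(B(SU(p)\times SU(p));\mathbb{Z})$ and then to read off that $\psi$ is an isomorphism. Write $\theta\colon SU(p)\times SU(p)\to G$ for the quotient by $\langle\Delta(\xi)\rangle\cong\mathbb{Z}/p$. Since $\theta\Delta$ kills $\xi$, it factors as $\theta\Delta=\bar\Delta\circ\tilde\pi$ through a map $\bar\Delta\colon PU(p)\to G$, and the first coordinate of $\psi$ is $\Delta^{*}=\bar\Delta^{*}\colon H^{4}(BG;\mathbb{Z})\to H^{4}(BPU(p);\mathbb{Z})$; moreover $q_{i}\pi\bar\Delta=\mathrm{id}_{PU(p)}$ for the two projections $q_{i}\colon PU(p)\times PU(p)\to PU(p)$, and $\pi\theta\Gamma_{1}=(\tilde\pi,\ast)$, $\pi\theta\Gamma_{2}=(\ast,\tilde\pi)$. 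Using $H^{4}(B(SU(p)\times SU(p));\mathbb{Z})=\mathbb{Z}\{c_{2}^{(1)},c_{2}^{(2)}\}$, where $c_{2}^{(i)}$ is the second Chern class of the $i$-th factor, I will track restrictions through $\theta^{*}$: if $\theta^{*}x=a\,c_{2}^{(1)}+b\,c_{2}^{(2)}$, then $(\theta\Gamma_{2})^{*}x=b\,c_{2}$ and $(\theta\Delta)^{*}x=(a+b)c_{2}$ in $H^{4}(BSU(p);\mathbb{Z})=\mathbb{Z}\{c_{2}\}$, while $(\theta\Delta)^{*}=\tilde\pi^{*}\bar\Delta^{*}$.

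First I would run the Serre spectral sequence of the fibration $B\mathbb{Z}/p\to B(SU(p)\times SU(p))\xrightarrow{\theta}BG$ coming from the central extension above, converging to $H^{*}(B(SU(p)\times SU(p));\mathbb{Z})=\mathbb{Z}[c_{2},\dots,c_{p}]\otimes\mathbb{Z}[c_{2}',\dots,c_{p}']$. As $G$ is connected, $BG$ is simply connected; comparing low-degree terms with the abutment (which vanishes in degrees $2$ and $3$) forces $H^{2}(BG;\mathbb{Z})=0$ and $H^{3}(BG;\mathbb{Z})=\mathbb{Z}/p$, generated by the transgression $\eta=d_{3}(v)$ of the generator $v\in H^{2}(B\mathbb{Z}/p;\mathbb{Z})$. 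In total degree $4$ the only base-edge term is $E_{2}^{4,0}=H^{4}(BG;\mathbb{Z})$, and no differential enters or leaves the spot $(4,0)$ (the possible sources $E^{1,2}$ and $E^{2,1}$ vanish since $H^{1}(BG;\mathbb{Z}/p)=0$). Hence the edge homomorphism $\theta^{*}\colon H^{4}(BG;\mathbb{Z})=E_{\infty}^{4,0}\hookrightarrow H^{4}(B(SU(p)\times SU(p));\mathbb{Z})$ is injective, so $H^{4}(BG;\mathbb{Z})$ is free and $\theta^{*}$ identifies it with a sublattice $L\subseteq\mathbb{Z}\{c_{2}^{(1)},c_{2}^{(2)}\}$ which has full rank $2$ rationally.

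It remains to pin down $L$, and this is the heart of the matter. For the lower bound I would use the classes $\pi^{*}q_{i}^{*}\zeta\in H^{4}(BG;\mathbb{Z})$, where $\zeta$ generates $H^{4}(BPU(p);\mathbb{Z})=\mathbb{Z}$, together with the fact—recalled from the spectral sequence of $B\mathbb{Z}/p\to BSU(p)\xrightarrow{\tilde\pi}BPU(p)$, exactly as in the analysis behind Proposition~\ref{proposition:3.2}—that $\tilde\pi^{*}\colon H^{4}(BPU(p);\mathbb{Z})\to H^{4}(BSU(p);\mathbb{Z})$ is multiplication by $\pm p$. By the relations $\pi\theta\Gamma_{i}$ above this gives $\theta^{*}(\pi^{*}q_{i}^{*}\zeta)=\pm p\,c_{2}^{(i)}$, so $L\supseteq p\mathbb{Z}\{c_{2}^{(1)},c_{2}^{(2)}\}$; and applying $\bar\Delta^{*}$ shows that every $x$ satisfies $(a+b)c_{2}=\tilde\pi^{*}(\bar\Delta^{*}x)\in p\mathbb{Z}\{c_{2}\}$, i.e. $a+b\equiv 0\pmod p$, whence $[\mathbb{Z}^{2}:L]\geq p$. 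For the matching upper bound I would return to the first spectral sequence: the successive quotients in total degree $4$ are $L,0,E_{\infty}^{2,2},0,E_{\infty}^{0,4}$, so $[\mathbb{Z}^{2}:L]=|E_{\infty}^{2,2}|\cdot|E_{\infty}^{0,4}|$, where $E_{2}^{2,2}=H^{2}(BG;\mathbb{Z}/p)\cong\mathbb{Z}/p$ and $E_{2}^{0,4}=H^{4}(B\mathbb{Z}/p;\mathbb{Z})\cong\mathbb{Z}/p$ each force order at most $p$. Crucially $E_{\infty}^{0,4}=0$, because $v^{2}$ transgresses nontrivially: $d_{3}(v^{2})=2v\,\eta\neq 0$ in $E_{3}^{3,2}=H^{3}(BG;\mathbb{Z}/p)\cong\mathbb{Z}/p$, as $p$ is odd and $\eta$ has nonzero mod $p$ reduction. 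Therefore $[\mathbb{Z}^{2}:L]\leq p$, so $L=\{a\,c_{2}^{(1)}+b\,c_{2}^{(2)}:a+b\equiv 0\pmod p\}$. This transgression step, pinning the index at $p$ rather than $p^{2}$, is the main obstacle: it produces a class of $H^{4}(BG;\mathbb{Z})$ whose restriction is not divisible by $p$, and the Chern-class computation for representations of $G$ (every irreducible with nontrivial central character on a factor has dimension divisible by $p$) shows this class is not algebraic.

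Finally I would assemble the isomorphism. Under $\theta^{*}x=a\,c_{2}^{(1)}+b\,c_{2}^{(2)}$ one has $\psi(x)=\bigl(\pm\frac{a+b}{p}\,\zeta,\;b\,c_{2}\bigr)$, which is well defined on $L$ since $p\mid a+b$. It is injective because $\theta^{*}$ is injective and $(a,b)\mapsto(\frac{a+b}{p},b)$ is injective; it is surjective because, as $(a,b)$ runs over $L$, writing $a=pk-b$ gives $(\frac{a+b}{p},b)=(k,b)$ ranging over all of $\mathbb{Z}^{2}$. Hence $\psi$ is an isomorphism, as claimed.
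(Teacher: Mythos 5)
Your proof is correct for odd $p$ (the paper's standing assumption), but it takes a genuinely different route from the paper's. The paper runs the integral Leray--Serre spectral sequence of the fibration $BSU(p)\stackrel{\Gamma_2}{\longrightarrow}BG\stackrel{p_1\circ\pi}{\longrightarrow}BPU(p)$: by Proposition~\ref{proposition:3.2} the only $E_2$-terms in positive total degree at most $5$ are $E_2^{3,0}=\mathbb{Z}/p$, $E_2^{4,0}=\mathbb{Z}$ and $E_2^{0,4}=\mathbb{Z}$, so no differentials are possible, and one gets a short exact sequence $0\to H^4(BPU(p);\mathbb{Z})\to H^4(BG;\mathbb{Z})\to H^4(BSU(p);\mathbb{Z})\to 0$ split by the section $\bar\Delta$ of $p_1\circ\pi$; that $\psi$ is an isomorphism is then formal. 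You instead fiber the other way, with $B\mathbb{Z}/p\to B(SU(p)\times SU(p))\stackrel{\theta}{\longrightarrow}BG$, and identify $\theta^*H^4(BG;\mathbb{Z})$ with the explicit index-$p$ sublattice $\{a\,c_2^{(1)}+b\,c_2^{(2)}:a+b\equiv 0\bmod p\}$. Your route is longer (it needs the transgression $d_3(v^2)=2v\eta\neq 0$, hence $p$ odd, plus the index bookkeeping), and unlike yours the paper's argument nowhere uses that $p$ is odd; what yours buys is the concrete lattice picture of $H^4(BG;\mathbb{Z})$ inside $H^4(B(SU(p)\times SU(p));\mathbb{Z})$ and an explicit formula for $\psi$, which makes the index-$p$ phenomenon behind the paper's later non-divisibility arguments visible.

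One input you cannot simply cite: the fact that $\tilde\pi^*\colon H^4(BPU(p);\mathbb{Z})\to H^4(BSU(p);\mathbb{Z})$ is multiplication by $\pm p$ is not contained in the statement or the proof of Proposition~\ref{proposition:3.2} (that proof uses the fibration $BU(p)\to BPU(p)\to K(\mathbb{Z},3)$, from which the multiplier is not read off). This is not a pedantic point: your formula $\psi(x)=\bigl(\pm\frac{a+b}{p}\,\zeta,\;b\,c_2\bigr)$ and your surjectivity argument both collapse if the multiplier were $\pm p^2$, and for $p=2$ it genuinely is $\pm 4$ (the generator $p_1$ of $H^4(BSO(3);\mathbb{Z})$ restricts to $-4c_2$). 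For odd $p$ the claim is true, and it is proved by the one-factor version of exactly your own argument: in the spectral sequence of $B\mathbb{Z}/p\to BSU(p)\stackrel{\tilde\pi}{\longrightarrow}BPU(p)$, the term $E_2^{2,2}=H^2(BPU(p);\mathbb{Z}/p)\cong\mathbb{Z}/p$ survives to $E_\infty$ (no differential can enter or leave it, using $H^5(BPU(p);\mathbb{Z})=0$), while $E_\infty^{0,4}=0$ by the same odd-$p$ transgression; hence the image of $\tilde\pi^*$ has index exactly $p$. With that lemma stated and proved, your proof is complete. Two small slips: $E_2^{2,1}$ vanishes because $H^1(B\mathbb{Z}/p;\mathbb{Z})=0$, not because $H^1(BG;\mathbb{Z}/p)=0$; and the closing sentence about non-algebraicity of the resulting class is tangential here --- it belongs to the discussion in Section~\ref{section:4}, not to this proposition.
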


\begin{proof}
Let $p_1:PU(p)\times PU(p) \to PU(p)$ be the projection onto the first factor.
Then, $p_1\circ \pi$'s fibre is $SU(p)$. Consider the spectral sequence associated with 
\[
BSU(p) \stackrel{\Gamma_2}{\longrightarrow}
BG \stackrel{p_1\circ \pi}{\longrightarrow}
BPU(p).
\]
The $E_2$-term is $H^{s}(BPU(p);H^{t}(BSU(p); \mathbb{Z}))$.
By Proposition~\ref{proposition:3.2}, $E_2^{s,t}=\{0\}$ unless $s=0,3,4$ and $t=0,4$
up to degree $5$.
In particular, $E_2^{s,t}=\{0\}$ for $s+t=5$.
The non-zero $E_2$-terms of total degree $4$ are given by
\[E_2^{4,0}=\mathbb{Z},\quad
E_2^{0,4}=\mathbb{Z}.
\]
The non-zero $E_2$-term of total degree $3$ is given by
\[
E_2^{3, 0}=\mathbb{Z}/p.
\]
So, for dimensional reasons, 
we have $E_\infty^{s,t}=E_2^{s,t}$ for $s+t=4$.
Hence, we have $H^4(BG;\mathbb{Z})=\mathbb{Z}\oplus \mathbb{Z}$ and
a short exact sequence
\[
0 \to H^4(BPU(p);\mathbb{Z}) \stackrel{(p_1\circ \pi)^*}{\longrightarrow}
H^4(BG;\mathbb{Z}) \stackrel{{\Gamma_2}^*}{\longrightarrow}
H^4(BSU(p);\mathbb{Z}) \to 0.
\]
Since the composition $p_1\circ \pi \circ \Delta$ is the identity map, 
this short exact sequence splits and the homomorphism $\phi$
is an isomorphism.
\end{proof}


\section{The mod $p$ cycle map for $G$}\label{section:4}

Let $G=SU(p)\times SU(p)/\langle \Delta(\xi) \rangle$ as in Section~\ref{section:2}.
In this section, we  define a virtual complex representation $\lambda''$ of $G$.
Using the Chern class $c_2(\lambda'')$, we prove Theorem~\ref{theorem:1.2}
for $K=G$. To be precise, we show that $c_2(\lambda'')$ is non-zero in $CH^2BG/p$ and
the mod $p$ reduction maps
$c_2(\lambda'')$ to $0$ in $H^4(BG)$. 
Theorem~\ref{theorem:1.2} for $K=G$ was 
obtained by Totaro in \cite{totaro-2014} 
and by the author in \cite{kameko-2015} independently.
From now on, we denote the Bockstein operation 
of degree $1$ by $Q_0$ and the Milnor operation of degree $2p-1$ by $Q_1$. 
These are cohomology operations on the mod $p$ cohomology.

Let 
$\lambda_1:SU(p) \to U(p)$ be the tautological representation, so that 
$\lambda_1(g)(v)=gv$ for $v \in \mathbb{C}^p$.
Let 
\[
\lambda_1^* \otimes \lambda_1:SU(p) \times SU(p) \to U(p^2)
\]
be  the complex representation defined by
\[
(\lambda_1^*\otimes \lambda_1)(g_1, g_2)(v_1^* \otimes v_2)
=( v_1^*g_1^{-1}) \otimes (g_2v_2),
\]
where $\mathbb{C}^{p^2}=(\mathbb{C}^{p})^* \otimes \mathbb{C}^p$, 
$(\mathbb{C}^p)^*=\mathrm{Hom}(\mathbb{C}^p, \mathbb{C})$.
The complex representation $\lambda_1^*\otimes \lambda_1$ induces 
a complex representation
$\lambda:G \to U(p^2)$.
We define 
a complex representation $\lambda'$ by $\lambda \circ \Delta \circ p_1 \circ \pi$.
Using the complex representations  $\lambda$ and $\lambda'$, we define 
a virtual complex representation $\lambda''$ by
 $\lambda''=\lambda-\lambda'$.
An element in the complex representation ring of $G$ corresponds to an element in
the topological $K$-theory $K^0(BG)=[BG, \mathbb{Z}\times BU]$. 
By abuse of notation, 
we denote by 
$\lambda'':BG\to \mathbb{Z} \times BU$ a map in the  homotopy class corresponding to $\lambda''$.
It is clear that 
\[
\Delta^* (\lambda'')=0, \quad {\Gamma_2}^*(\lambda'')=p \lambda_1
\]
in the complex representation ring of $G$.

We denote by $x_4$ the cohomology class in $H^4(BG; \mathbb{Z})$ such that 
\begin{enumerate}
\item ${\Gamma_2}^*(x_4)=c_2(\lambda_1)$, 
\item $\Delta^{*}(x_4)=0$.
\end{enumerate}
Then, $c_2(\lambda'')=p x_4$. Hence, $\rho(c_2(\lambda''))=0$
in $H^4(BG)$.
It is  clear from the definition that 
$c_2(\lambda'') \not=0$ in $H^4(BG; \mathbb{Z})$.
Thus, if we show that the Chern class 
$c_2(\lambda'')$ is not divisible by $p$ in $CH^2BG$, then 
$c_2(\lambda'')$
represents a non-zero element in $CH^2BG/p$ and the mod $p$ cycle map
is not injective for $BG$.
We prove it by contradiction.
Suppose that the Chern class $c_2(\lambda'')$ is 
divisible by $p$, that is, we suppose that there exists
a virtual complex representation $\mu:BG\to \mathbb{Z}\times BU$ of $G$ such that
$x_4 \in \mathrm{Im}\, \mu^* \subset H^4(BG; \mathbb{Z})$. 
Then, $Q_1\rho(x_4)$ must be zero since $H^{odd}(\mathbb{Z} \times BU)=\{0\}$.
We prove the non-existence of the above virtual complex representation
by showing that $Q_1\rho(x_4)\not=0$.
To show that $Q_1\rho(x_4)\not=0$, 
we show that $Q_1(f\circ g)^{*}(\rho(x_4))\not=0$ in $H^{*}(BA_3)$, where
$f, g$ and $A_3$ are defined in Section~\ref{section:2}.
The following Proposition~\ref{proposition:4.1} completes 
the proof of Theorem~\ref{theorem:1.2} for $K=G$.

We proved $(f\circ g)^*(\rho(x_4))=Q_0(x_1y_1z_1)$ in  \cite{kameko-2015}.
Because we use a similar but slightly different  
argument in the proof of 
Theorem~\ref{theorem:1.2} for $K=H$, 
we prove the following  weaker form in this paper.

\begin{proposition}\label{proposition:4.1}
We have
$Q_1(f\circ g)^*(\rho(x_4))\not=0$ in $H^{2p+3}(BA_3)$.
\end{proposition}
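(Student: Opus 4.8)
The plan is to compute the restriction $\omega:=(f\circ g)^*\rho(x_4)$ inside $H^*(BA_3)=\Lambda(x_1,y_1,z_1)\otimes\mathbb F_p[Q_0x_1,Q_0y_1,Q_0z_1]$, where $x_1,y_1,z_1\in H^1(BA_3)$ are dual to $\Delta(\alpha),\Delta(\beta),\Gamma_2(\xi)$ and $Q_0z_1=z_2$, and then to apply $Q_1$. I do \emph{not} intend to reproduce the exact value $\omega=Q_0(x_1y_1z_1)$ of \cite{kameko-2015}; it will be enough to pin down the lowest–filtration part of $\omega$ with respect to the central extension $\langle\Gamma_2(\xi)\rangle\to A_3\xrightarrow{\varphi}A_2$, after which a degree argument does the rest.

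First I would determine $H^4(BG)$ and locate $\rho(x_4)$. Feeding Proposition~\ref{proposition:3.2} into the Serre spectral sequence of $B\langle\Gamma_2(\xi)\rangle\to BG\xrightarrow{\pi}B(PU(p)\times PU(p))$, the transgression is $d_2(z_1)=\pi^*(u_2\otimes 1-1\otimes u_2)$, and a short bookkeeping shows that, up to degree $4$, $H^4(BG)$ is two–dimensional, spanned by $w^2$ (filtration $4$) and $wz_2$ (filtration $2$), where $w\in H^2(BG)$ is the common image $\pi^*(u_2\otimes1)=\pi^*(1\otimes u_2)$. Since $H^5(BG;\mathbb Z)=0$ by the proof of Proposition~\ref{proposition:3.4}, the reduction $H^4(BG;\mathbb Z)\otimes\mathbb F_p\to H^4(BG)$ is an isomorphism; as $x_4$ and $(p_1\circ\pi)^*v$ form a $\mathbb Z$–basis of $H^4(BG;\mathbb Z)$ (here $v$ generates $H^4(BPU(p);\mathbb Z)$), their reductions form an $\mathbb F_p$–basis. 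One checks $\rho(v)=u_2^2\neq 0$ (the SS forces $d_2(u_2z_1)=u_2^2$, and $u_2z_1$ cannot survive into $H^3(BSU(p))=0$), so $\rho((p_1\circ\pi)^*v)=w^2$, and therefore $\rho(x_4)=c\,wz_2+c'w^2$ with $c\neq 0$.

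Next I would restrict to $A_3$. From $\iota^*u_2=x_1y_1$ (Proposition~\ref{proposition:3.3}) and $p_1\circ\pi\circ(f\circ g)=\iota\circ\varphi$ one gets $(f\circ g)^*w=x_1y_1$, hence $(f\circ g)^*w^2=(x_1y_1)^2=0$. Applying naturality of the spectral sequence to the map of fibrations over $B\varphi$ (the commutative square of Section~\ref{section:2} gives $\pi\circ(f\circ g)=g\circ\varphi$, and the fibre $B\langle\Gamma_2(\xi)\rangle$ maps identically, carrying $z_2$ to $z_2$), the class $(f\circ g)^*(wz_2)$ has leading term $x_1y_1z_2$ in $E_\infty^{2,2}(BA_3)$. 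Thus $\omega=c\,x_1y_1z_2+R$ with $c\neq 0$ and $R\in F^3H^4(BA_3)$; by degree every element of $F^3H^4(BA_3)$ has fibre degree at most one, so $R$ involves no $z_2=Q_0z_1$.

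Finally I would apply $Q_1$, using $Q_1x_1=(Q_0x_1)^p$, $Q_1y_1=(Q_0y_1)^p$, $Q_1z_1=z_2^{\,p}$, $Q_1z_2=0$ and the derivation property. This gives
\[
Q_1(x_1y_1z_2)=(Q_0x_1)^p\,y_1z_2-x_1\,(Q_0y_1)^p\,z_2,
\]
whose two monomials each contain $z_2$ to the first power. On the other hand $R$ contains no $z_2$, and $Q_1$ can introduce the class $z_2$ only through $Q_1z_1=z_2^{\,p}$, i.e. only as the power $z_2^{\,p}$; hence $Q_1R$ has no monomial with $z_2$ to the first power. There is therefore no cancellation, and $Q_1\omega=c\,(Q_0x_1)^p y_1z_2-c\,x_1(Q_0y_1)^p z_2+Q_1R\neq 0$ in $H^{2p+3}(BA_3)$, which is the claim. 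The main obstacle is the first step—computing $H^4(BG)$ and showing $c\neq 0$, i.e. that $\rho(x_4)$ genuinely involves the transgressive class $wz_2$ rather than being a multiple of $w^2$; once that is in hand, everything is a formal consequence of the Milnor–Bockstein formulae together with the filtration/degree argument, which is precisely what frees us from having to identify the correction term $R$ exactly.
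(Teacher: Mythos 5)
Your proposal is correct, and its overall skeleton is the paper's: work in the Serre spectral sequence of $B\langle\Gamma_2(\xi)\rangle\to BG\to BPU(p)\times BPU(p)$, show that $\rho(x_4)$ has a nonzero component on $b_2z_2$ in $E_\infty^{2,2}$ (this is the paper's Proposition~\ref{proposition:4.2}), pull back to $BA_3$ to get leading term $c\,x_1y_1z_2$ plus an error term of filtration $\geq 3$ (hence involving no $z_2$), and then apply $Q_1$, noting that $Q_1$ can only create $z_2$-powers equal to $z_2^p$ from the error term, so the $z_2$-linear part survives. Your monomial bookkeeping at the end is exactly the paper's argument, which it packages as the $Q_1$-stable module $M$ with $H^*(BA_3)/M=\varphi^*(H^*(BA_2))\{z_2\}$. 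Where you genuinely diverge is in proving $c\neq 0$: the paper restricts along $\Gamma_2$, using that $\Gamma_2^*\rho(x_4)=\rho(c_2(\lambda_1))\neq 0$ in $H^4(BSU(p))$ while $\Gamma_2^*\pi^*\bigl(H^4(BPU(p)\times BPU(p))\bigr)=0$ and $F^3H^4(BG)=\mathrm{Im}\,\pi^*$, so $\rho(x_4)\notin F^3$; you instead argue integrally, combining the basis $\{x_4,(p_1\circ\pi)^*v\}$ of $H^4(BG;\mathbb{Z})$ from Proposition~\ref{proposition:3.4}, the vanishing $H^5(BG;\mathbb{Z})=0$ plus universal coefficients (so $\rho$ embeds $H^4(BG;\mathbb{Z})\otimes\mathbb{F}_p$), and the identification of $\rho((p_1\circ\pi)^*v)$ with a generator of $F^4H^4(BG)$. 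The paper's route is shorter because $x_4$ is \emph{defined} by its $\Gamma_2$- and $\Delta$-restrictions, so nonvanishing after restriction is immediate; your route is more self-contained on the integral side and has the side benefit of identifying $u_2^2$ as a generator of $H^4(BPU(p))$. Two small points about your version: your proof that $u_2^2\neq 0$ tacitly needs $d_2(z_2)=0$ in $E_r^{s,t}(BSU(p))$ (so that $u_2z_1$ cannot be killed by an incoming differential); this is Kudo transgression applied to $z_2=Q_0z_1$, the same fact the paper uses when asserting $d_3(z_2)=a_3$. You could also bypass the computation $\rho(v)\doteq u_2^2$ entirely: since $p_1\circ\pi\circ\Delta=\mathrm{id}$, the map $(p_1\circ\pi)^*$ is split injective, and its image in degree $4$ lies in $F^4H^4(BG)$, which is one-dimensional; that already shows $\rho((p_1\circ\pi)^*v)$ spans $F^3=F^4$, whence $\rho(x_4)\notin F^3$ and $c\neq 0$.
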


To prove Proposition~\ref{proposition:4.1}, 
we compute  the Leray-Serre spectral sequences  and the homomorphism 
$(f\circ g )^*$ induced by the following commutative diagram.
\[
\begin{diagram}
\node{BA_3} \arrow{s,l}{\varphi} \arrow{e,t}{f\circ g}
\node{BG} \arrow{s,r}{\pi}
\node{BSU(p)} \arrow{s,r}{\tilde{\pi}} \arrow{w,t}{\Gamma_2}
\\
\node{BA_2}\arrow{e,t}{f\circ g}
\node{BPU(p)\times BPU(p)}
\node{BPU(p)} \arrow{w,t}{\Gamma_2}
\end{diagram}
\]
We denote by $x_1, y_1$ the generators of the mod $p$ cohomology of $BA_3$
corresponding to the generators  
$\Delta(\alpha)$, 
$ \Delta(\beta)$
of $A_3$, so that we have $\varphi^*(x_1)=x_1$, $\varphi^*(y_1)=y_1$.
Let $z_1$ be the element in $H^1(B\langle \Gamma_2(\xi) \rangle)$ such that 
${\Gamma_2}^*(z_1)=-z_1 \in E_2^{0,1}(BSU(p))$. The element $z_1$ in 
$E_2^{0,1}(BSU(p))$ and $u_2\in E_2^{2,0}(BSU(p))$ are
 defined in Section~\ref{section:3}, so that $d_2(z_1)=u_2$ in $E_2^{2,0}(BSU(p))$.
 We define the generator $u_3$ of $H^3(BPU(p))$ by $u_3=Q_0 u_2$.
Let us consider the $E_2$-term of the spectral sequence $E_r^{s,t}(BG)$. 
The $E_2$-term is as follows:
\[
E_2^{*,*}=H^{*}(BPU(p))\otimes H^{*}(BPU(p)) \otimes \mathbb{Z}/p[z_2]\otimes \Lambda(z_1).
\]
Since $f\circ g=\Delta \circ \iota$, we have $(f\circ g )^*(1\otimes u)=(f\circ  g)^*(u\otimes 1)=\iota^*(u)$.
Moreover, we have ${\Gamma_2}^*(1\otimes u)=u$ and ${\Gamma_2}^{*}(u \otimes 1)=0$ for $\deg u>0$.

Let $a_i=u_i\otimes 1-1\otimes u_i$, $b_i=u_i\otimes 1$.
Then, up to degree $6$, the $E_2$-term is a free $\mathbb{Z}/p[a_2, z_2]\otimes \Lambda(z_1)$-module with the basis 
$\{ 1, b_2, a_3, b_3, b_2^2, a_3b_3, b_2^3 \}$.
Since $(f\circ g)^*d_2(z_1)=0$ and ${\Gamma_2}^*(d_2(z_1))=-u_2$, 
the first non-trivial differential  is given by
\[
d_2(z_1)=a_2.
\]
So, up to degree $5$, the $E_3$-term is a free $\mathbb{Z}/p[z_2]$-module with the basis 
$\{ 1, b_2, a_3, b_3, b_2^2 \}$.
In particular, we have $a_3b_2=0$ in $E_3^{5,0}$.
Since $(f\circ g)^*(d_3(z_2))=0$ and ${\Gamma_2}^*(d_3(z_2))=-u_3$, 
the second non-trivial differential is given by 
\[
d_3(z_2)=a_3.
\]
Up to degree $4$, the $E_4$-term is a free $\mathbb{Z}/p$-module with the basis $\{ 1, b_2, b_3, b_2^2, b_2z_2 \}$
and the spectral sequence  collapses at the $E_4$-level. 
Thus, the $E_\infty$-terms of total degree $4$ are as follows:
\[
\begin{array}{lllll}
E_\infty^{0,4}=\{ 0\}, 
& E_\infty^{1,3}=\{ 0\},
& E^{2,2}_\infty =\mathbb{Z}/p \{ b_2 z_2 \}, 
&E_\infty^{3,1}=\{ 0 \},
&
E_{\infty}^{4,0}=\mathbb{Z}/p \{ b_2^2\}.
\end{array}
\]
The element $b_2$ is a permanent cocycle. 
By abuse of notation, we denote by ${b}_2$ the cohomology class in 
$F^2H^2(BG)$ representing $b_2$.
Since 
$
H^2(BSU(p))=\{ 0\},
$
we have
\[
{\Gamma_2}^*(\pi^*({b}_2))=0.
\]
Moreover, $\pi^{*}(H^4(BPU(p)\times BPU(p)))=\mathbb{Z}/p\{ b_2^2\}.$
Hence, we have
\[
{\Gamma_2}^{*}(\pi^*(H^4(BPU(p)\times BPU(p))))=\{ 0\}.
\]

On the other hand,  ${\Gamma_2}^*\rho(x_4)=\rho(c_2(\lambda_1))\not=0$ in $H^4(BSU(p))$.
Therefore,  $\rho(x_4)$ is not in the image of 
\[
\pi^*:H^4(BPU(p)\times BPU(p)) \to H^4(BG).
\]
Hence, we have the following result:

\begin{proposition}
\label{proposition:4.2}
The cohomology class $\rho(x_4)$ represents $\alpha b_2 z_2$ in $E_{\infty}^{2,2}$
 for some $\alpha\not=0$ in $\mathbb{Z}/p$.
\end{proposition}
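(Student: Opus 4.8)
The plan is to read the conclusion off the filtration on $H^4(BG)$ supplied by the spectral sequence $E_r^{s,t}(BG)$, using the $E_\infty$-chart in total degree $4$ computed above together with the fact that $\Gamma_2^*\rho(x_4)\neq 0$.

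First I would record what the vanishing $E_\infty^{0,4}=E_\infty^{1,3}=E_\infty^{3,1}=\{0\}$ forces on the filtration. Writing $E_\infty^{s,t}=F^sH^4(BG)/F^{s+1}H^4(BG)$, these vanishings give
\[
H^4(BG)=F^2H^4(BG),\qquad F^3H^4(BG)=F^4H^4(BG),
\]
while $F^2/F^3\cong E_\infty^{2,2}=\mathbb{Z}/p\{b_2z_2\}$ and $F^4=E_\infty^{4,0}=\mathbb{Z}/p\{b_2^2\}$. Thus $\rho(x_4)\in H^4(BG)=F^2H^4(BG)$ automatically determines a class $\alpha\,b_2z_2\in E_\infty^{2,2}$; this is precisely the meaning of ``$\rho(x_4)$ represents $\alpha b_2z_2$,'' so the entire content of the proposition is the assertion that $\alpha\neq 0$.

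The second step is to identify the subgroup $F^3H^4(BG)$ --- equivalently, the kernel of the surjection $H^4(BG)\twoheadrightarrow E_\infty^{2,2}$ --- with the image of $\pi^*$. The edge homomorphism places $\pi^*(H^4(BPU(p)\times BPU(p)))$ inside the bottom filtration $F^4H^4(BG)$, and the computation above gives $\pi^*(H^4(BPU(p)\times BPU(p)))=\mathbb{Z}/p\{b_2^2\}$. Since $F^3H^4(BG)=F^4H^4(BG)=E_\infty^{4,0}$ is one-dimensional and $\mathrm{Im}\,\pi^*$ is a nonzero subgroup of it, the two coincide: $F^3H^4(BG)=\mathrm{Im}\,\pi^*$.

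I would finish by contradiction. If $\alpha=0$, then $\rho(x_4)$ vanishes in $E_\infty^{2,2}$, hence $\rho(x_4)\in F^3H^4(BG)=\mathrm{Im}\,\pi^*$. But $\Gamma_2^*$ kills the image of $\pi^*$, since $\Gamma_2^*\pi^*(H^4(BPU(p)\times BPU(p)))=\{0\}$, whereas $\Gamma_2^*\rho(x_4)=\rho(c_2(\lambda_1))\neq 0$ in $H^4(BSU(p))$; this is impossible, so $\alpha\neq 0$. The only load-bearing point is the identification $F^3H^4(BG)=\mathrm{Im}\,\pi^*$, and I expect no genuine obstacle there, since both the $E_\infty$-chart and the two inputs $\pi^*(H^4(BPU(p)\times BPU(p)))=\mathbb{Z}/p\{b_2^2\}$ and $\Gamma_2^*\rho(x_4)\neq 0$ have already been established above; everything else in the argument is formal.
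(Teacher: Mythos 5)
Your proposal is correct and follows essentially the same route as the paper: the paper likewise deduces the result from the $E_\infty$-chart in total degree $4$, the identity $\pi^*(H^4(BPU(p)\times BPU(p)))=\mathbb{Z}/p\{b_2^2\}$, and the fact that $\Gamma_2^*$ kills $\mathrm{Im}\,\pi^*$ while $\Gamma_2^*\rho(x_4)=\rho(c_2(\lambda_1))\neq 0$. The only difference is presentational: the paper concludes directly from ``$\rho(x_4)\notin\mathrm{Im}\,\pi^*$'' and leaves implicit the filtration bookkeeping $F^3H^4(BG)=F^4H^4(BG)=\mathrm{Im}\,\pi^*$, which you spell out explicitly.
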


Now, we  complete the proof of Proposition~\ref{proposition:4.1} 
using Proposition~\ref{proposition:4.2}.

\begin{proof}[Proof of Proposition~\ref{proposition:4.1}]
Since $(f\circ g)^*(b_2)=x_1y_1$, we have 
\[
(f\circ g)^*(b_2z_2)=x_1y_1z_2
\]
in the spectral sequence, where $z_2=Q_0 z_1$ in $H^2(B\langle \Gamma_2(\xi)\rangle)$.
Let $x_2=Q_0 x_1$, $y_2=Q_0 y_1$. Then, $H^{*}(BA_3)=\mathbb{Z}/p[x_2, y_2, z_2]\otimes \Lambda(x_1, y_1, z_1)$. $\varphi^*(H^*(BA_2))$ is the subalgebra generated by $x_1, y_1, x_2, y_2$.
Therefore, we have
\[
(f\circ g)^*(\rho(x_4))=\alpha x_1y_1 z_2+ u'z_1+u''
\]
 for some $u', u'' \in \varphi^*(H^*(BA_2))$.
Let $M$ be the $\varphi^*(H^*(BA_2))$-module generated by
\[
1, \; z_2^{i}, \; z_1, \; z_1z_2,  \; z_1z_2^i (i\geq 2), 
\]
so that 
\[
H^{*}(BA_3)/M=\varphi^*(H^{*}(BA_2)) \{ z_2 \}.
\]
Since $Q_1 z_1=z_2^p$, $Q_1z_2=0$, and $Q_1$ is a derivation, $M$ is closed under the action of 
Milnor operation $Q_1$.
We have
\[
(f\circ g)^*(\rho(x_4))\equiv \alpha x_2^p y_1 z_2- \alpha x_1y_2^p z_2 \not\equiv 0 
\mod M.
\]
It completes the proof of Proposition~\ref{proposition:4.1}.
\end{proof}

\section{The mod $p$ cohomology of $B\tilde{\pi}(H_2)$} \label{section:5} 

In this section, we collect some facts on the mod $p$ cohomology of $B\tilde{\pi}(H_2)$
as Propositions~\ref{proposition:5.1} and \ref{proposition:5.2}.
We use these facts in the proof of Proposition~\ref{proposition:6.1} in the next section.

We begin by defining generators of $H^1(B\tilde{\pi}(H_2))$. 
Since the commutator subgroup $[\tilde{\pi}(H_2), \tilde{\pi}(H_2)]$ is generated by 
$\tilde{\pi}(\mathrm{diag}(\xi^{a_1}, \dots, \xi^{a_p}))$, where $a_1, \dots, a_p$ range over $\{
0,\dots, p-1\}$ with $a_1+\cdots+a_p\equiv 0 \mod  p$, 
\[
\tilde{\pi}(H_2)/[\tilde{\pi}(H_2), \tilde{\pi}(H_2)]=\mathbb{Z}/p\oplus \mathbb{Z}/p
.
\]
This elementary abelian $p$-group is generated by 
$\tilde{\pi}(\sigma_1), \tilde{\pi}(\beta)$.
We denote by $v_1, w_1$ the generators of 
$H^1(B\langle \tilde{\pi}(\sigma_1) \rangle)$, 
$H^1(B\langle \tilde{\pi}(\beta) \rangle)$ corresponding to $\tilde{\pi}(\sigma_1)$, 
$\tilde{\pi}(\beta)$.
By abuse of notation, we denote the corresponding generators in 
$H^1(B\tilde{\pi}(H_2))$ by the same symbol, so that
for the inclusions 
\[
\iota_{\beta}:\langle \tilde{\pi}(\beta) \rangle \to 
\tilde{\pi}(H_2),\quad \iota_{\sigma}:\langle \tilde{\pi}(\sigma_1) \rangle \to 
\tilde{\pi}(H_2),
\]
we have 
$\iota_\beta^*(w_1)=w_1$, $\iota_\beta^*(v_1)=0$, 
$\iota_\sigma^*(w_1)=0$, $\iota_\sigma^*(v_1)=v_1$.
Indeed, we have 
$H^{*}(B\langle \tilde{\pi}(\sigma_1)\rangle)=\mathbb{Z}/p[v_2]\otimes \Lambda(v_1)$, 
$H^{*}(B\langle \tilde{\pi}(\beta)\rangle )=\mathbb{Z}/p[w_2]\otimes \Lambda(w_1)$, 
where
$v_2=Q_0 v_1$, $w_2=Q_0w_1$.
We denote the inclusion of $\tilde{\pi}(H_2)$ to $PU(p)$ by 
\[
\iota:\tilde{\pi}(H_2)\to PU(p)
\]
and we recall that we defined the generator $u_2$ 
of $H^{2}(BPU(p))$ in Proposition~\ref{proposition:3.3}.

\begin{proposition}\label{proposition:5.1}
In $H^{*}(B\tilde{\pi}(H_2))$, 
we have
$\iota^*(u_2) v_1\not=0$, $\iota^*(u_2^2)\not=0$.
\end{proposition}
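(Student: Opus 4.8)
The plan is to prove both non-vanishing assertions at once by restricting to a single elementary abelian subgroup. Let $D=\langle \sigma_1,\dots,\sigma_p\rangle\subset SU(p)$ be the abelian group of diagonal matrices generated by the $\sigma_k$, and set $E=\tilde{\pi}(D)\subset\tilde{\pi}(H_2)$. Since $\tilde{\pi}(\sigma_k)^p=\tilde{\pi}(\xi)=1$ and the $\sigma_k$ commute, $E$ is elementary abelian; as $\xi\in D=\ker\tilde{\pi}\cdot D$ we have $\tilde{\pi}^{-1}(E)=D$. Writing $j\colon E\to\tilde{\pi}(H_2)$ for the inclusion, it suffices to show that $j^*(\iota^*(u_2))\,j^*(v_1)\neq 0$ and $(j^*(\iota^*(u_2)))^2\neq 0$ in $H^*(BE)$.

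The key step is to identify $j^*\iota^*(u_2)\in H^2(BE)$. Restricting the central extension $\langle\xi\rangle\to H_2\to\tilde{\pi}(H_2)$ over $E$ gives the extension $\langle\xi\rangle\to D\to E$ and the associated fibration $B\langle\xi\rangle\to BD\to BE$. Exactly as in Proposition~\ref{proposition:3.3}, naturality of the transgression yields $d_2(z_1)=j^*\iota^*(u_2)$, so $j^*\iota^*(u_2)$ is the extension class of $\langle\xi\rangle\to D\to E$ in $H^2(BE)$. Because $D$ is abelian, the commutator part of this class vanishes, and since $p$ is odd the class equals $Q_0$ of the $p$-th power homomorphism $\wp\colon E\to\langle\xi\rangle$, $\wp(x)=\tilde{x}^p$ (independent of the lift $\tilde{x}$, and additive because $D$ is abelian). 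As $\sigma_k^p=\xi$ for every $k$, we get $\wp(\tilde{\pi}(\sigma_k))=\xi$, that is $\wp=j^*(v_1)$ after identifying $\langle\xi\rangle=\mathbb{Z}/p$. Hence
\[
j^*\iota^*(u_2)=Q_0(j^*v_1)=j^*(v_2).
\]

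It remains to check the two products in $H^*(BE)$. The class $\ell:=j^*(v_1)$ is non-zero, since it takes the value $1$ on $\tilde{\pi}(\sigma_1)$. The polynomial part of $H^*(BE)$ is an integral domain containing $Q_0\ell$, so $Q_0\ell\neq 0$ and $(Q_0\ell)^2\neq 0$; thus $(j^*\iota^*(u_2))^2=(j^*v_2)^2\neq 0$. For the first product, choosing a basis of $H^1(BE)$ having $\ell$ as one coordinate reduces $\ell\cdot Q_0\ell$ to $t\cdot Q_0 t$ for a single degree-one generator, which is the non-zero degree-$3$ generator of $H^*(B\mathbb{Z}/p)$; hence $j^*\iota^*(u_2)\,j^*(v_1)=\ell\,Q_0\ell\neq 0$. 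Both restrictions being non-zero, the classes $\iota^*(u_2)v_1$ and $\iota^*(u_2^2)$ are non-zero in $H^*(B\tilde{\pi}(H_2))$.

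The main obstacle is the middle paragraph: one must verify that $j^*\iota^*(u_2)$ really is the extension class of $D$ over $E$ (via naturality of the transgression, as in Proposition~\ref{proposition:3.3}) and that, for an abelian extension with $p$ odd, this class equals $Q_0$ of the $p$-th power map, with no commutator contribution. This is the abelian-extension counterpart of the commutator computation $\iota^*(u_2)=x_1y_1$ used in Proposition~\ref{proposition:3.3}. Granting it, the remaining computations take place entirely in the cohomology of an elementary abelian group and are routine.
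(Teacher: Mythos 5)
Your proof is correct, and it rests on the same underlying mechanism as the paper's — naturality of the transgression $d_2(z_1)=u_2$ under restriction to an abelian subgroup lying over the $\sigma_k$'s — but it executes this on a different subgroup, which changes the key lemma required. The paper restricts to the cyclic subgroup $\langle\tilde{\pi}(\sigma_1)\rangle\cong\mathbb{Z}/p$, whose preimage in $H_2$ is $\langle\sigma_1\rangle\cong\mathbb{Z}/p^2$: there $H^2(B\langle\tilde{\pi}(\sigma_1)\rangle)=\mathbb{Z}/p\{v_2\}$ is one-dimensional, so non-splitness of the extension already forces $\iota_\sigma^*\iota^*(u_2)=d_2(z_1)=\alpha v_2$ with $\alpha\neq 0$, and both products $\alpha v_1v_2$ and $\alpha^2v_2^2$ are visibly non-zero in $\mathbb{Z}/p[v_2]\otimes\Lambda(v_1)$; no structure theory of extension classes is needed. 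You instead restrict to the full diagonal quotient $E\cong(\mathbb{Z}/p)^{p-1}$, which forces the extra ingredient the paper avoids: the classification, for odd $p$, of abelian extensions of an elementary abelian group by $\mathbb{Z}/p$ as $Q_0$ of the $p$-th power homomorphism. That fact is standard and correct, and in exchange you obtain a sharper conclusion — the exact identification $j^*\iota^*(u_2)=j^*(v_2)$, not merely the non-vanishing of two products — but it is genuinely more machinery for the same end.

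Two points you should tighten. First, the equality $\wp=j^*(v_1)$ requires $v_1(\tilde{\pi}(\sigma_k))=1$ for every $k$, not only $k=1$; this holds because conjugation by $\tilde{\pi}(\beta)$ permutes the $\tilde{\pi}(\sigma_k)$ cyclically, so they all have the same image in the abelianization of $\tilde{\pi}(H_2)$, and it is needed because the classes $\tilde{\pi}(\sigma_k)$ with $k\geq 2$ also generate $E$. Second, both the identification of $d_2(z_1)$ with the extension class and the formula ``class $=Q_0(\wp)$'' depend on a choice of isomorphism $\langle\xi\rangle\cong\mathbb{Z}/p$; an inconsistent choice only changes your classes by a non-zero scalar, which is harmless for the non-vanishing you need, but should be acknowledged if you assert the equality $j^*\iota^*(u_2)=j^*(v_2)$ on the nose.
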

\begin{proof}
We consider the Leray-Serre spectral sequences associated with 
the vertical fibrations in the following commutative diagram.
\[
\begin{diagram}
\node{B\langle \sigma_1\rangle} \arrow{e,t}{\iota_\sigma} \arrow{s,l}{\tilde{\pi}}
\node{BH_2} \arrow{s,r}{\tilde{\pi}}
\arrow{e,t}{\iota}
\node{BSU(p)}
\arrow{s,r}{\pi}
\\
\node{B\langle \tilde{\pi}(\sigma_1)\rangle}  \arrow{e,t}{\iota_\sigma}
\node{B\tilde{\pi}(H_2)}
\arrow{e,t}{\iota}
\node{BPU(p).}
\end{diagram}
\]
Let $z_1 \in E_2^{0,1}(BSU(p))$, $u_2\in E_2^{2,0}(BSU(p))$ 
be elements defined in Section~\ref{section:3}. By abuse of notation, 
we denote elements 
$\iota^*(z_1) \in E_2^{0,1}(BH_2)$, 
$\iota_\sigma^*(\iota^*(z_1)) \in E_2^{0,1}(B\langle \sigma_1\rangle)$
by $z_1$.
Since
$\langle \sigma_1\rangle =\mathbb{Z}/p^2$, 
\[
d_2(z_1)=\alpha v_2
\]
 for some $\alpha \not=0$ in $\mathbb{Z}/p$
  in the Leray-Serre spectral sequence 
$E_2^{2,0}(B\langle \sigma_1\rangle)$.
Since $u_2= d_2(z_1)$ in the Leray-Serre spectral sequence 
$E_2^{2,0}(BSU(p))$, 
we have
\[
\iota_\sigma^*(\iota^*(u_2))= d_2(z_1)=\alpha v_2
\]
 in 
$
H^{*}(B\langle \tilde{\pi}(\sigma_1)\rangle)=\mathbb{Z}/p[v_2]\otimes \Lambda(v_1).
$
Hence, we have 
$\iota_\sigma^*(\iota^*(u_2) v_1)=\alpha v_1v_2\not=0$, 
$\iota_\sigma^*(\iota^*(u_2^2))=\alpha^2 v_2^2\not=0$.
Therefore, we obtain the desired result
$\iota^{*}(u_2) v_1\not=0$ and 
$\iota^{*}(u_2^2)\not=0$ in $H^{*}(B\tilde{\pi}(H_2))$.
\end{proof}

\begin{proposition}\label{proposition:5.2}
In $H^{*}(B\tilde{\pi}(H_2))$, 
we have
$\iota^*(u_2) w_1=0$.
\end{proposition}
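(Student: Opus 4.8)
The plan is to realise $w_1$ as a class pulled back from the quotient of $\tilde\pi(H_2)$ by its diagonal part, and to show that $\iota^*(u_2)$, although nonzero on the fibre, annihilates $w_1$ for a reason visible in the Leray--Serre spectral sequence. Write $D=\langle\sigma_1,\dots,\sigma_p\rangle$ for the diagonal subgroup; it is normal in $H_2$ with $H_2/D=\langle\beta\rangle$, and $\tilde\pi(D)$ is elementary abelian of rank $p-1$ on which $\tilde\pi(\beta)$ acts by cyclically permuting the $\tilde\pi(\sigma_i)$. Thus there is a fibration $B\tilde\pi(D)\to B\tilde\pi(H_2)\xrightarrow{q}B\langle\tilde\pi(\beta)\rangle$, and since $w_1$ is dual to $\tilde\pi(\beta)$ in the abelianisation we have $w_1=q^*\bar w_1$; in particular $w_1$ has filtration $1$. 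On the other hand $\iota^*(u_2)$ has filtration $0$: by Proposition~\ref{proposition:5.1} its restriction along $\langle\tilde\pi(\sigma_1)\rangle\hookrightarrow\tilde\pi(D)$ is $\alpha v_2\neq0$, so it restricts nontrivially to the fibre. Because $u_2$ is pulled back from $BPU(p)$ and $\tilde\pi(\beta)$ acts on $\tilde\pi(D)$ by conjugation inside $PU(p)$, the restriction $\bar u_2:=\iota^*(u_2)|_{B\tilde\pi(D)}$ is $\langle\tilde\pi(\beta)\rangle$-invariant, and it equals $\alpha Q_0\phi$, where $\phi\colon\tilde\pi(D)\to\mathbb{Z}/p$ is the invariant homomorphism recording $p$-th powers (all $\sigma_i^p=\xi$).

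First I would analyse the product in the spectral sequence. The class $\iota^*(u_2)\,w_1$ has filtration $\ge 1$, and its leading term in $E_\infty^{1,2}$ is the image of the invariant class $\bar u_2$ under cup product with $\bar w_1$, i.e. its class in $H^1(\mathbb{Z}/p;H^2(B\tilde\pi(D)))=\ker N/\operatorname{im}(t-1)$, where $t=\tilde\pi(\beta)_*$ and $N=1+t+\cdots+t^{p-1}$; concretely the cup map $H^0(\mathbb{Z}/p;M)\to H^1(\mathbb{Z}/p;M)$ sends an invariant $m$ to its class, which vanishes exactly when $m\in\operatorname{im}(t-1)$. The key point is that $\bar u_2$ does lie in $\operatorname{im}(t-1)$. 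As a module over $\mathbb{Z}/p[\langle\tilde\pi(\beta)\rangle]$, $H^1(B\tilde\pi(D))$ is indecomposable of dimension $p-1$ (a single Jordan block for the unipotent $t$); its socle $\ker(t-1)$ is one-dimensional and equals the line spanned by $\phi$, and --- precisely because $p$ is odd, so $p-1\ge 2$ --- this socle is contained in the radical $\operatorname{im}(t-1)$. Hence $\phi\in\operatorname{im}(t-1)$, and applying the equivariant operation $Q_0$ gives $\bar u_2=\alpha Q_0\phi\in\operatorname{im}(t-1)\subseteq H^2(B\tilde\pi(D))$. Therefore the leading term of $\iota^*(u_2)\,w_1$ in $E_\infty^{1,2}$ is zero.

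It remains to rule out the higher-filtration contributions, which is the main obstacle. After the previous step $\iota^*(u_2)\,w_1$ lies in $F^2H^3$, with graded pieces $E_\infty^{2,1}$ and $E_\infty^{3,0}$. The term $E_\infty^{3,0}$ consists of classes pulled back from the base, i.e. multiples of $w_1w_2$; this contribution is killed by restricting to $\langle\beta\rangle$, where $\iota^*(u_2)$ vanishes because the central extension $\langle\xi\rangle\to\langle\beta,\xi\rangle\to\langle\tilde\pi(\beta)\rangle$ is split (indeed $\langle\beta,\xi\rangle\cong\mathbb{Z}/p\times\mathbb{Z}/p$, since $\beta^p=1$). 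The genuinely troublesome piece is $E_\infty^{2,1}$, which is nonzero because the finite fibre $B\tilde\pi(D)$ has odd cohomology. The cleanest way I would dispose of it is to enlarge the fibre to the full torus: $\tilde\pi(H_2)$ sits inside $\tilde\pi(H_2')=T^{p-1}\rtimes\langle\tilde\pi(\beta)\rangle$, and both $u_2$ and $w_1$ are restrictions of the corresponding classes on $B\tilde\pi(H_2')$. Running the identical argument for $BT^{p-1}\to B\tilde\pi(H_2')\to B\langle\tilde\pi(\beta)\rangle$, the leading term again vanishes --- now using that $\sum t_i$, the analogue of $\phi$ inside $H^2(BT^{p-1})$ (which is the same $(p-1)$-dimensional Jordan block), lies in the radical for $p\ge 3$ --- while $E_\infty^{2,1}=H^2(\mathbb{Z}/p;H^1(BT^{p-1}))=0$ since $H^{\mathrm{odd}}(BT^{p-1})=0$. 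Thus $u_2w_1$ is a multiple of $w_1w_2$ in $H^3(B\tilde\pi(H_2'))$, that multiple is $0$ by restriction to $\langle\beta\rangle$, and restricting the resulting identity $u_2w_1=0$ along $B\tilde\pi(H_2)\to B\tilde\pi(H_2')$ yields $\iota^*(u_2)\,w_1=0$. (If one insists on staying within $\tilde\pi(H_2)$, one must instead detect the $E_\infty^{2,1}$-component directly, which is the only real difficulty in the argument.)
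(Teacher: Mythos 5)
Your proposal is correct, and its operative argument is the paper's own: the enlargement to the torus is exactly the paper's passage from $\tilde{\pi}(H_2)$ to $\tilde{\pi}'(H_2')$ with $H_2'=T^{p}\rtimes\langle\beta\rangle$ (Lemma~\ref{lemma:5.3}); the Leray--Serre spectral sequence of $BT^{p-1}\to B\tilde{\pi}'(H_2')\to B\langle\tilde{\pi}(\beta)\rangle$ is the one the paper computes; your socle-in-radical observation for the $(p-1)$-dimensional Jordan block is precisely the paper's explicit verification that $\tilde{u}\equiv \frac{p(p-1)}{2}u_2^{(p-1)}\equiv 0$ modulo $\mathrm{im}(1-g)$ (the point where $p$ odd enters); and the residual multiple of $w_1w_2$ is killed in both treatments by restricting to $\langle\tilde{\pi}(\beta)\rangle$, over which the extension by $\langle\xi\rangle$ splits. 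Your opening analysis with fiber $B\tilde{\pi}(D)$, which you yourself discard because of the nonzero $E_\infty^{2,1}$, is the only departure, and it correctly diagnoses why one must work with the full torus rather than inside $\tilde{\pi}(H_2)$.
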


To prove Proposition~\ref{proposition:5.2}, we defined the subgroup 
$H_2'=T^p \rtimes \mathbb{Z}/p$ of the unitary group $U(p)$
generated by  diagonal matrices and $\beta$. 
The quotient group $\tilde{\pi}'(H_2')$ contains $\tilde{\pi}(H_2)$ as a subgroup
and they are subgroups of the projective unitary group $PU(p)$.
We denote by 
\[
\iota'':\tilde{\pi}(H_2)\to \tilde{\pi}'(H_2'), \quad \iota':\tilde{\pi}'(H_2')\to PU(p)
\]
 the inclusions, so that $\iota=\iota' \circ \iota''$.
We use the following lemma in the proof of Proposition~\ref{proposition:5.2}.

\begin{lemma}\label{lemma:5.3}
In $H^{*}(B\tilde{\pi}'(H_2'))$, 
there exists an element  $t_2\in H^2(B\tilde{\pi}'(H_2'))$ such that
$H^{1}(B\tilde{\pi}'(H_2'))=\mathbb{Z}/p\{ w_1\}$, 
$H^{2}(B\tilde{\pi}'(H_2'))=\mathbb{Z}/p\{ t_2, w_2\}$
where $w_2=Q_0 w_1$, $(\iota'' \circ \iota_{\sigma})^*(t_2)=v_2$ and $(\iota''\circ \iota_\beta)^*(t_2)=0$.
Moreover, we have $t_2 w_1=0$ in $H^*(B\tilde{\pi}'(H_2'))$.
\end{lemma}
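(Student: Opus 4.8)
In $H^{*}(B\tilde{\pi}'(H_2'))$, there exists an element $t_2\in H^2(B\tilde{\pi}'(H_2'))$ such that
$H^{1}(B\tilde{\pi}'(H_2'))=\mathbb{Z}/p\{ w_1\}$,
$H^{2}(B\tilde{\pi}'(H_2'))=\mathbb{Z}/p\{ t_2, w_2\}$
where $w_2=Q_0 w_1$, $(\iota'' \circ \iota_{\sigma})^*(t_2)=v_2$ and $(\iota''\circ \iota_\beta)^*(t_2)=0$. Moreover, we have $t_2 w_1=0$ in $H^*(B\tilde{\pi}'(H_2'))$.

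Let me sketch a proof.The plan is to exploit the semidirect-product structure $\tilde{\pi}'(H_2')=\tilde{\pi}'(T^p)\rtimes\mathbb{Z}/p$ directly via the Leray--Serre spectral sequence of the fibration
\[
B\tilde{\pi}'(T^p)\longrightarrow B\tilde{\pi}'(H_2')\longrightarrow B\mathbb{Z}/p,
\]
where the base is $B\langle\tilde{\pi}(\beta)\rangle$ and $H^*(B\mathbb{Z}/p)=\mathbb{Z}/p[w_2]\otimes\Lambda(w_1)$. First I would identify the fibre cohomology. The torus $T^p\subset U(p)$ projects to $\tilde{\pi}'(T^p)=T^p/\langle\xi\rangle\cong T^{p-1}$, a torus of rank $p-1$, so $H^*(B\tilde{\pi}'(T^p))=\mathbb{Z}/p[s_1,\dots,s_{p-1}]$ is polynomial on degree-$2$ generators. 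The cyclic group $\mathbb{Z}/p$ generated by $\beta$ acts by cyclically permuting the diagonal entries, hence acts on $H^2$ of the fibre as the permutation representation on the rank-$(p-1)$ lattice; I would choose coordinates so that this is the standard rank-$(p-1)$ quotient of the permutation module. The key observation is that $t_2$ should be built as an invariant element in $E_2^{0,2}$: concretely, $v_2=(\iota''\circ\iota_\sigma)^*(t_2)$ forces $t_2$ to restrict correctly on $\langle\tilde\pi(\sigma_1)\rangle$, and I would take $t_2$ to be (the class represented by) the $\beta$-invariant generator corresponding to the ``sum of entries'' direction in the permutation module, which survives to $E_\infty$ as a permanent cocycle.

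Next I would pin down the cohomology in low degrees. Since $H^1$ of the fibre vanishes, the spectral sequence gives $H^1(B\tilde{\pi}'(H_2'))=E_2^{1,0}=H^1(B\mathbb{Z}/p)=\mathbb{Z}/p\{w_1\}$ immediately. In degree $2$ the contributions are $E_\infty^{2,0}=\mathbb{Z}/p\{w_2\}$ from the base and $E_\infty^{0,2}=(E_2^{0,2})^{d_2=\cdots}$, the invariants $H^2(B\tilde{\pi}'(T^p))^{\mathbb{Z}/p}$ that survive. The invariants of the permutation module on $H^2$ of the fibre are one-dimensional (spanned by the symmetric sum, i.e.\ the class I call $t_2$), and I would check $d_2$ and $d_3$ vanish on it for degree/target reasons, so $E_\infty^{0,2}=\mathbb{Z}/p\{t_2\}$. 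There is no $E_\infty^{1,1}$ term since the fibre has no degree-$1$ cohomology. Hence $H^2(B\tilde{\pi}'(H_2'))=\mathbb{Z}/p\{t_2,w_2\}$, giving the stated additive structure, and the restriction formulas $(\iota''\circ\iota_\sigma)^*(t_2)=v_2$, $(\iota''\circ\iota_\beta)^*(t_2)=0$ follow by tracing $t_2$ through the edge homomorphism and restricting to the two evident cyclic subgroups.

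The main obstacle, and the most interesting part, is the relation $t_2w_1=0$. The product $t_2w_1$ lives in $H^3$, and I would analyze it filtration-degree by filtration-degree in the spectral sequence: $w_1\in E_\infty^{1,0}$ and $t_2\in E_\infty^{0,2}$, so their product sits in $E_\infty^{1,2}$, and I would argue this graded piece vanishes because $E_2^{1,2}=H^1(B\mathbb{Z}/p)\otimes H^2(\text{fibre})^{\mathbb{Z}/p}$ receives or supports a differential forcing it to die. A cleaner route, which I would prefer, is to use the module structure over $H^*(B\mathbb{Z}/p)$ directly: multiplication by $w_1$ is essentially the action of a generator of $\mathbb{Z}/p$-cohomology, and the invariant class $t_2$ is annihilated by $w_1$ precisely because $t_2$ lifts a $\mathbb{Z}/p$-\emph{fixed} class whose transgression-type obstruction to being a global lift is measured by $w_1 t_2$. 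Concretely, I would verify $t_2w_1=0$ by restricting to a detecting family: by Proposition~\ref{proposition:5.1} and the explicit description of the elementary abelian and cyclic subgroups, the only degree-$3$ classes that could obstruct are detected on abelian subgroups, and on each such subgroup either $w_1$ or $t_2$ restricts to zero, so the product must vanish there; since $H^3(B\tilde{\pi}'(H_2'))$ is detected on these subgroups (the group being $T^{p-1}\rtimes\mathbb{Z}/p$ with a well-understood Weyl-type detection), I conclude $t_2w_1=0$ globally.
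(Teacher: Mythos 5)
Your setup (the Leray--Serre spectral sequence of $BT^{p-1}\to B\tilde{\pi}'(H_2')\to B\langle\tilde{\pi}(\beta)\rangle$, identification of the invariant class in $E_2^{0,2}$, and the resulting additive structure of $H^1$ and $H^2$) is essentially the paper's own route, and that part is fine modulo routine checks. The genuine gap is in the relation $t_2w_1=0$, and none of your three suggested arguments closes it. Route 1 rests on a false claim: $E_2^{1,2}$ is not $H^1(B\mathbb{Z}/p)\otimes H^2(\mathrm{fibre})^{\mathbb{Z}/p}$ but the twisted group cohomology $H^1(\mathbb{Z}/p;H^2(BT^{p-1}))=\mathrm{Ker}\,N/\mathrm{Im}\,(1-g)$ (with $N=(1-g)^{p-1}$ the norm), which is \emph{nonzero}: the paper computes $E_2^{1,2}=\mathbb{Z}/p\{u_2^{(1)}w_1\}$. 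Moreover no differential can kill it: the extension is split, so the edge map from the base is injective, hence nothing hits the bottom row, and all differentials out of bidegree $(1,2)$ land in the bottom row. So $E_\infty^{1,2}\neq\{0\}$, and your "this graded piece vanishes" step fails.

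What actually makes the product die in $E_\infty^{1,2}$ is a computation you never perform: cup product with $w_1$ on $E_2^{0,2}$ is induced by the natural map $\mathrm{Ker}\,(1-g)\to \mathrm{Ker}\,N/\mathrm{Im}\,(1-g)$, and the invariant generator $\tilde{u}=u_2^{(1)}+2u_2^{(2)}+\cdots+(p-1)u_2^{(p-1)}$ lies in $\mathrm{Im}\,(1-g)$ because $1+2+\cdots+(p-1)=p(p-1)/2\equiv 0 \bmod p$. This is precisely where $p$ odd enters; for $p=2$ one has $\tilde{\pi}'(H_2')\cong O(2)$ and $t_2w_1\neq 0$, so the lemma is false at $p=2$, and since your sketch nowhere uses oddness it cannot be correct as written. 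Even after this step you are not done: it only shows $t_2w_1\in F^3H^3(B\tilde{\pi}'(H_2'))=\mathbb{Z}/p\{w_1w_2\}$, and one must still rule out a $w_1w_2$ term --- the paper does this by restricting to $\langle\tilde{\pi}(\beta)\rangle$, where $t_2$ restricts to $0$ but $w_1w_2$ does not. Your route 3 does not substitute for this: the detection of $H^3(B\tilde{\pi}'(H_2'))$ on abelian subgroups is asserted without proof, and the claimed dichotomy "either $w_1$ or $t_2$ restricts to zero on each abelian subgroup" is false, e.g.\ on $A_2=\langle\tilde{\pi}(\alpha),\tilde{\pi}(\beta)\rangle\subset\tilde{\pi}'(H_2')$, where by Proposition~\ref{proposition:3.3} $t_2$ restricts to $x_1y_1\neq 0$ and $w_1$ restricts to $y_1\neq 0$.
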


Now, we prove Proposition~\ref{proposition:5.2} assuming Lemma~\ref{lemma:5.3}.

\begin{proof}[Proof of Proposition~\ref{proposition:5.2}]
We consider the Leray-Serre spectral sequences associated with 
the vertical fibrations in the following commutative diagram.
\[
\begin{diagram}
\node{B\langle \beta, \xi \rangle} \arrow{e,t}{\iota_\beta} \arrow{s,l}{\tilde{\pi}}
\node{BH_2} \arrow{s,r}{\tilde{\pi}}
\arrow{e,t}{\iota}
\node{BSU(p)}
\arrow{s,r}{\tilde{\pi}}
\\
\node{B\langle \tilde{\pi}(\beta)\rangle}  \arrow{e,t}{\iota_\beta}
\node{B\tilde{\pi}(H_2)}
\arrow{e,t}{\iota}
\node{BPU(p).}
\end{diagram}
\]
Suppose that 
${\iota'}^{*}(u_2)=\alpha_1 t_2+\alpha_2 w_2$ where $\alpha_1, \alpha_2\in \mathbb{Z}/p$.
Then, by Lemma~\ref{lemma:5.3}, 
we have 
\[
{\iota'}^*(u_2) w_1=\alpha_{1} t_2w_1+\alpha_{2} w_1w_2=\alpha_2 w_1w_2.
\]
Hence, we have $(\iota \circ \iota_\beta)^*(u_2)w_1=\alpha_{2} w_1w_2$.
On the other hand, since the group extension 
\[
\langle \xi \rangle \to \langle \beta, \xi \rangle \to \langle \tilde{\pi}(\beta) \rangle
\]
is trivial, $d_2:H^1(B\langle \xi \rangle) \to H^2(B\langle \tilde{\pi}(\beta)\rangle)$
in $E_2^{2,t}(B\langle \beta, \xi\rangle)$ is zero and 
\[
(\iota \circ \iota_\beta)^*(u_2)=d_2((\iota\circ \iota_\beta)^*(z_1))=0
\]
 in $H^{*}(B\langle \tilde{\pi}(\beta) \rangle)=E_2^{2,0}(B\langle \beta, \xi\rangle)$.
Therefore, we have $\alpha_2=0$ and  $w_1 {\iota'}^*(u_2)=0$ in 
$H^{*}(B\tilde{\pi}'(H'_2))$. 
Therefore, 
we have 
\[
\iota^*(u_2)w_1={\iota''}^*( {\iota'}^*(u_2) w_1)=0
\]
in $H^{*}(B\tilde{\pi}(H_2)).$
\end{proof}


We end this section by proving Lemma~\ref{lemma:5.3}.

\begin{proof}[Proof of Lemma~\ref{lemma:5.3}]
We need to study the mod $p$ cohomology only up to degree $3$.
We  define $t_2$ by ${\iota'}^*(u_2)$ where $u_2$ is the generator of $H^2(BPU(p))$. 

We consider the Leray-Serre spectral sequence associated with 
the following commutative diagram:
\[
\begin{diagram}
\node{BT^{p}} \arrow{s}\arrow{e,t}{{\tilde{\pi}}'} \node{BT^{p-1}} \arrow{s} \\
\node{BH_2'} \arrow{s} \arrow{e,t}{{\tilde{\pi}}'} \node{B\tilde{\pi}'(H_2')} \arrow{s} \\
\node{B\langle \beta \rangle} \arrow{e,t}{\tilde{\pi}}
 \node{B\langle \tilde{\pi}(\beta) \rangle.}
\end{diagram}
\]
We choose a generator $t_2^{(i)} \in H^2(BT^p)$ 
corresponding to the $i$-th diagonal entry of $T^p$, so that
$H^{2}(BT^p)=\mathbb{Z}/p\{ t_2^{(1)}, \dots, t_2^{(p)}\}$.
The matrix $\beta$ acts on $T^p$ as the cyclic permutation of diagonal entries, so that
it acts on $H^{2}(BT^{p})$ as the cyclic permutation on
$t_2^{(1)}, \dots, t_2^{(p)}$. The induced homomorphism
 ${ \tilde{\pi}}'^*:H^{2}(BT^{p-1}) \to H^{2}(BT^p)$ is injective and
 we may take  a basis $\{ u_2^{(1)}, \dots, u_2^{(p-1)} \}$ for $H^2(BT^{p-1})$ 
 such that 
 $\tilde{\pi}'^*(u_2^{(i)})=t_2^{(i)}-t_2^{(i+1)}$ for $i=1, \dots, p-1$. 
 Hence, $\langle \beta \rangle$ acts on $H^2(BT^{p-1})$ 
 by 
 \[
 g u_2^{(i)}=u_2^{(i+1)}
 \]
  for $i=1,\dots, p-2$  and
\[
g u_2^{(p-1)}=-(u_2^{(1)}+\cdots+u_2^{(p-1)})
\]
 for some generator $g$ of $\langle \beta \rangle$.
We consider the Leray-Serre spectral sequence converging to the mod $p$
cohomology of $B\tilde{\pi}'(H_2')$.
The $E_1$-term is additively given as follows:
\[
E_1=\mathbb{Z}/p[u_2^{(1)}, \dots, u_2^{(p-1)}]
\{ \; w_2^{i}, w_1w_2^i\;|\; i\geq 0 \; \}.
\]
The first non-trivial differential is given by 
\[
d_1(u w_2^i)=((1-g) u) w_1w_2^{i}, \quad
d_1(u  w_1w_2^i)=((1-g)^{p-1} u )w_2^{i+1},
\]
where $u\in \mathbb{Z}/p[u_2^{(1)}, \dots, u_2^{(p-1)}]=E_1^{0,*}$.
The kernel of 
\[
(1-g):\mathbb{Z}/p\{ u_2^{(1)}, \dots, u_2^{(p-1)} \} 
\to \mathbb{Z}/p\{ u_2^{(1)}, \dots, u_2^{(p-1)} \} 
\]
 is
spanned by a single element 
\[
u_2^{(1)}+2u_2^{(2)}+\cdots +(p-1) u_2^{(p-1)}
\]
and the image of $(1-g)$ is spanned by $p-2$ elements
\[
u_{2}^{(1)}-u_{2}^{(2)}, \; \dots, \;  u_2^{(p-2)}-u_2^{(p-1)}.
\]
We denote the generator of the kernel of $(1-g)$ by $\tilde{u}$, that is, 
\[
\tilde{u}=u_2^{(1)}+2u_2^{(2)}+\cdots +(p-1) u_2^{(p-1)}.
\]
It is easy to see that
\[
\tilde{u}
\equiv
 (1+\cdots +(p-1))u_{2}^{(p-1)} 
 \equiv \dfrac{p(p-1)}{2} u_2^{(p-1)} 
\equiv 0
\]
modulo the image of $(1-g)$.
By direct calculation, we have $(1-g)^{p-1}(u_2^{(1)})=0$ and $\mathrm{Ker}\, (1-g)^{p-1}=\mathbb{Z}/p\{ u_2^{(1)}, \dots, u_2^{(p-1)}\}$.
Hence, we have 
\begin{align*}
E_2^{0,2}&=\mathrm{Ker}\, (1-g)=\mathbb{Z}/p\{ \tilde{u}\} , 
\\
 E_2^{1,2}&=(\mathrm{Ker}\, (1-g)^{p-1}/\mathrm{Im}\, (1-g))\{ w_1\} =\mathbb{Z}/p\{ u_2^{(1)}w_1 \} , 
\end{align*}
respectively.
Moreover, we have 
$E_r^{*, odd}=\{ 0\}$, $E_r^{*,0}=\mathbb{Z}/p[w_2]\otimes \Lambda(w_1)$ for $*\geq 0$, 
$r\geq 1$.
Since the elements in $E_r^{*,0}$ are permanent cocycles, 
the spectral sequence collapses at the $E_2$-level up to degree $3$.
Choose a cohomology class $t_2'$ in $H^2(B\tilde{\pi}'(H_2'))$
representing the generator $\tilde{u}$ of $E_\infty^{0,2}=\mathbb{Z}/p$. 
Then, $H^2(B\tilde{\pi}'(H_2'))$ is generated by $t_2'$ and $w_2$.
Suppose that \[
{\iota'}^*(u_2)=\alpha_1 w_2+\alpha_2 t_2', 
\]where $\alpha_1, \alpha_2\in \mathbb{Z}/p$.
Since $(\iota'\circ \iota'' \circ \iota_\sigma)^*(u_2)=v_2$
and $(\iota''\circ \iota_\sigma)^*(w_2)=0$, 
\[
(\iota''\circ \iota_\sigma)^*(\alpha_2 t_2')=v_2
\] and so $\alpha_2\not=0$.
Hence, $t_2$ and $w_2$ generate $H^2(B\tilde{\pi}'(H_2'))$.

Next, we prove that $t_2w_1=0$.
The $E_\infty$-terms of total degree $3$ are given by
\[
E_\infty^{0,3}=\{ 0 \}, \quad E_\infty^{1,2}=\mathbb{Z}/p\{ u_2^{(1)} w_1 \}, \quad 
E_\infty^{2,1}=\{0\} \quad \mbox{and} \quad 
E_\infty^{3,0}=\mathbb{Z}/p\{ w_1w_2\}.
\]
Therefore, we have
\[
F^2H^3(B\tilde{\pi}'(H_2'))=F^3H^3(B\tilde{\pi}'(H_2'))
=\mathbb{Z}/p\{ w_1w_2\}. 
\]
Since $\alpha_2 t_2' w_1$ represents $\alpha_2\tilde{u}  w_1$ 
and since $\tilde{u}\in \mathrm{Ker}\, (1-g)$ is congruent to zero modulo the image of $(1-g)$, 
we have
$\tilde{u}w_1=0$ in $E_\infty^{1,2}$.
So, we have
\[
t_2 w_1 \in 
F^3H^3(B\tilde{\pi}'(H_2'))
=\mathbb{Z}/p\{ w_1w_2\}. 
\]
Therefore, 
$ t_2 w_1=\alpha_3 w_1w_2$ for some $\alpha_3\in \mathbb{Z}/p$. 
We proved that 
$(\iota''\circ \iota_\beta)^*(t_2)=(\iota'\circ\iota''\circ  \iota_\beta)^*(u_2)=0$
in the proof of Proposition~\ref{proposition:5.2}.
Thus, we have $(\iota''\circ \iota_\beta)^{*}( t_2w_1 )=0$.
On the other hand, we have 
$(\iota''\circ \iota_\beta)^*(w_1w_2)=w_1w_2\not=0$
in $H^{*}(B\langle \tilde{\pi}(\beta)\rangle )$.
Hence, we obtain $\alpha_3=0$.
\end{proof}

\section{The mod $p$ cycle map for $H$} \label{section:6} 

In this section, we prove  Theorem~\ref{theorem:1.2}.
Let $G$ be $SU(p)\times SU(p)/\langle \Delta(\xi) \rangle$
and let $H=p_{+}^{1+2} \times H_2/\Delta(\xi)$
 as in Section~\ref{section:3}.
 Let $K$ be a subgroup of $G$ containing $H$, that is, $H\subset K \subset G$.
We proved in Section~\ref{section:4} that the mod $p$ cycle map $CH^2BG/p \to H^4(BG)$ is
not injective. To be more precise,  we defined the virtual complex representation $\lambda'':BG\to \mathbb{Z} \times BU$ such that 
the Chern class 
$c_{2}(\lambda'') \in CH^2BG$ is non-trivial in $CH^2BG/p$, 
that is, $c_2(\lambda'')$ is not divisible by $p$, and 
the mod $p$ cycle map maps $c_2(\lambda'')$ to $\rho(c_2(\lambda''))=0$. 
We denote the inclusions by $f':K\to G$, $f'':H\to K$, $f:H\to G$, so that
$f=f'\circ f'':H\to G$.
It is clear that
$\rho(c_2(\lambda''\circ f'))$ is zero in $H^4(BK)$.
So, in order to prove Theorem~\ref{theorem:1.2}, 
we need to show that
 { $c_2(\lambda''\circ f')$  remains non-zero in $CH^2BK\subset H^4(BK;\mathbb{Z})$} 
and that $c_2(\lambda''\circ f')$ remains not divisible by $p$  in $CH^2BK$.
These follow immediately from the following:
\begin{itemize}
\item[(1)] { $c_2(\lambda''\circ f)={f''}^*(c_2(\lambda''\circ f'))$ 
 is not zero in $CH^2BH\subset H^4(BH;\mathbb{Z})$.} 
\item[(2)] $c_2(\lambda''\circ f)={f''}^*(c_2(\lambda''\circ f'))$
 is not divisible by $p$  in $CH^2BH$.
\end{itemize}
We prove  (1) and (2)  in the rest of this section.

To prove (1) and (2), we consider the spectral sequences associated with the vertical fibrations below
and the induced homomorphism between them.
\[
\begin{diagram}
\node{BH} \arrow{s,l}{\pi} \arrow{e,t}{f}
\node{BG} \arrow{s,r}{\pi}
\\
\node{BA_2\times B\tilde{\pi}(H_2)}\arrow{e,t}{f}
\node{BPU(p)\times BPU(p)}
\end{diagram}
\]

Let $g:BA_2\to BA_2\times B\tilde{\pi}(H_2)$ be the map defined in Section~\ref{section:2} 
by $g(\tilde{\pi}(\alpha))=(\tilde{\pi}(\alpha),\tilde{\pi}(\alpha))$, $g(\tilde{\pi}(\beta))=(\tilde{\pi}(\beta),\tilde{\pi}(\beta))$.
Let $v_1, w_1$ be the generators of  $H^1(B\tilde{\pi}(H_2))$ 
defined in the previous section.
Let  $x_1, y_1$ be those of $H^1(BA_2)$ as defined in Section~\ref{section:3}.
We denote by $x_1, y_1, v_1, w_1$ the corresponding generators of 
$H^1(BA_2\times B\tilde{\pi}(H_2))$,  so that 
$g^*(x_1)=x_1$, $g^*(v_1)=0$, $g^*(y_1)=g^{*}(w_1)=y_1$.
We denote by $z_1$ a generator of $H^1(B\langle \Gamma_2(\xi) \rangle)=E_2^{0,1}$
as in Section~\ref{section:4}. 
Let 
$x_2=Q_0 x_1$, $y_2=Q_0 y_1$, $z_2=Q_0 z_1$
as usual, so that
$H^{*}(BA_2)=\mathbb{Z}/p[x_2,y_2]\otimes \Lambda(x_1, y_1)$. 
Also, let $u_2$ be the generator
of $H^2(BPU(p))$ defined in Section~\ref{section:3}, and let $u_3 = Q_0 u_2$
as in Section~\ref{section:4}.
Let $\iota$ be the map induced by the inclusion of 
$\tilde{\pi}(H_2)$ to $PU(p)$. We need to compute the spectral sequence up to degree $4$.
Differentials $d_2$, $d_3$ in the spectral sequence $E_r^{s,t}(BH)$ are given by:
\begin{align*}
d_2(z_1)& =x_1y_1- \iota^*(u_2), \\
d_3(z_2)&=x_2y_1-x_1y_2-\iota^*(u_3), 
\end{align*}
since 
\begin{align*}
f^*(u_2\otimes 1 - 1 \otimes u_2)&=x_1y_1-\iota^*(u_2), \\
f^*(u_3\otimes 1 - 1 \otimes u_3)&=x_2y_1-x_1y_2-\iota^*(u_3)
\end{align*}
and since the differentials $d_2$, $d_3$ in the spectral sequence 
$E_r^{s,t}(BG)$ are given by $d_2(z_1)=u_2\otimes 1 - 1 \otimes u_2$, $d_3(z_2)=u_3\otimes 1 - 1 \otimes u_3$ as we saw in Section~\ref{section:4}.

\begin{proposition}\label{proposition:6.1}
The $E_\infty$-terms $E_\infty^{s,t}$ {\rm (}$s=0,1, 2$, $s+t=3, 4${\rm )}
for the spectral sequence $E_r^{s,t}(BH)$ are given as follows:
$E_\infty^{0,3}=E_\infty^{1,2}=\{ 0\}$, 
\[
E_\infty^{2,1}=\mathbb{Z}/p\{ w_1x_1z_1, w_1y_1z_1\},
\]
$E_\infty^{0,4}=E_\infty^{1,3}=\{ 0\}$, 
and 
\[
E_{\infty}^{2,2}=\mathbb{Z}/p\{ x_1y_1z_2, w_1x_1z_2, w_1y_1z_2\}.\]
\end{proposition}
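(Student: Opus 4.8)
The plan is to compute the Leray--Serre spectral sequence $E_r^{s,t}(BH)$ directly in the range $s+t\le 4$, using the two differentials recorded just above the statement together with the multiplicative relations in $H^*(B\tilde\pi(H_2))$ established in Section~\ref{section:5}. Writing $R=H^*(BA_2\times B\tilde\pi(H_2))$, the $E_2$-page is the free $R$-module $R\otimes \mathbb{Z}/p[z_2]\otimes\Lambda(z_1)$, and I would first isolate the relevant summands: the classes of fibre degree $t=1$ are $R\cdot z_1$, those of fibre degree $t=2$ are $R\cdot z_2$, and those of fibre degree $t=3$ are $R\cdot z_1z_2$. Since $d_2$ and $d_3$ are derivations vanishing on $R$ and on $z_2$, the differential $d_2$ is simply multiplication by $c=x_1y_1-\iota^*(u_2)$ carrying $R\cdot z_1$ into $R$ and $R\cdot z_1z_2$ into $R\cdot z_2$, while $d_3$ is multiplication by $d=x_2y_1-x_1y_2-\iota^*(u_3)$ carrying $R\cdot z_2$ into $R$.

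First I would run $d_2$. For the column $t=1$ this amounts to computing the kernel of multiplication by $c$ on $R^2$, and for the map from $t=3$ into $t=2$ it amounts to the cokernel of multiplication by $c$ from $R^0$ into $R^2$. The decisive input is Section~\ref{section:5}: since $\iota^*(u_2)w_1=0$ by Proposition~\ref{proposition:5.2}, the classes $w_1x_1z_1$ and $w_1y_1z_1$ satisfy
\[
d_2(w_1x_1z_1)=w_1x_1\,c=-x_1\,w_1\iota^*(u_2)=0,\qquad
d_2(w_1y_1z_1)=-y_1\,w_1\iota^*(u_2)=0,
\]
so they survive to $E_3$, whereas the analogous classes built from $v_1$ are killed because $\iota^*(u_2)v_1\neq0$ by Proposition~\ref{proposition:5.1}. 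Checking the remaining degree-$2$ generators of $R$ against $c$ in the same way pins down $E_3^{2,1}$, and the computations of $E_3^{0,3}$ (where the only $z_1z_2$-class supports a nonzero $d_2$ since $c\neq0$) and of $E_3^{1,2}$ show that these bidegrees will ultimately vanish.

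Next I would run $d_3$, multiplication by $d=x_2y_1-x_1y_2-\iota^*(u_3)$ on the surviving $z_2$-classes, and compute $E_4^{s,t}=E_\infty^{s,t}$ in the stated bidegrees, noting that all higher differentials vanish for degree reasons since the total degree is at most $4$. For $E_\infty^{2,2}$ I would identify $E_3^{2,2}$ as $(R^2/\mathbb{Z}/p\{c\})\cdot z_2$ and compute the kernel of multiplication by $d$; the surviving classes should be exactly $x_1y_1z_2$, $w_1x_1z_2$, $w_1y_1z_2$. The vanishing of $E_\infty^{1,2}$, $E_\infty^{1,3}$, $E_\infty^{0,3}$, $E_\infty^{0,4}$ would follow by checking that $d_3$ is injective on the corresponding $z_2$-classes, for which I again use $\iota^*(u_3)=Q_0\iota^*(u_2)$ and the identities relating it to $v_1,w_1,v_2,w_2$ obtained by applying $Q_0$ to the relations of Section~\ref{section:5}.

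The main obstacle is the linear algebra of the kernels and cokernels of multiplication by $c$ and by $d$ on $R$ in degrees $2,3,4$: this requires a precise basis for $H^*(B\tilde\pi(H_2))$ through degree $4$ and exact knowledge of how $\iota^*(u_2)$ and $\iota^*(u_3)$ multiply against $v_1,w_1,v_2,w_2$. The asymmetry between $v$ and $w$ --- namely $\iota^*(u_2)v_1\neq0$ but $\iota^*(u_2)w_1=0$ --- is precisely what forces the $w$-classes, and not the $v$-classes, into $E_\infty$, so the entire computation hinges on Propositions~\ref{proposition:5.1} and~\ref{proposition:5.2}.
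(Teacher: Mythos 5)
Your proposal is correct and takes essentially the same route as the paper's proof: a direct computation of $E_r^{s,t}(BH)$ in which $d_2$ acts as multiplication by $c=x_1y_1-\iota^*(u_2)$ and $d_3$ as multiplication by $d=x_2y_1-x_1y_2-\iota^*(u_3)$ interpreted in the ring $E_3$, with Propositions~\ref{proposition:5.1} and~\ref{proposition:5.2} (together with the identity $w_1\iota^*(u_3)=w_2\iota^*(u_2)$ obtained by applying $Q_0$ to $w_1\iota^*(u_2)=0$) as the decisive inputs, exactly as in the paper. The one bookkeeping slip --- attributing the vanishing of $E_\infty^{0,3}$ and $E_\infty^{1,3}$ to injectivity of $d_3$ on $z_2$-classes, when in fact these terms already die at $E_3$ because multiplication by $c$ is injective on the relevant $z_1z_2$-classes --- is harmless, since the $d_2$ computation you describe first would settle those bidegrees before $d_3$ is ever needed.
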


\begin{proof}
For the sake of notational simplicity, 
let
\[
R=\mathbb{Z}/p[x_2, y_2] \otimes H^{*}(B\tilde{\pi}(H_2)), 
\]
so that 
\[ 
H^{*}(BA_2)\otimes H^{*}(B\tilde{\pi}(H_2))=R\{ 1, x_1, y_1, x_1y_1\}.
\]
The set $\{ v_1, w_1\}$ is a basis for $H^1(B\tilde{\pi}(H_2))$. 
We consider a basis for $H^{2}(B\tilde{\pi}(H_2))$. 
By Proposition~\ref{proposition:5.1}, we have $\iota^*(u_2)^2\not=0$.
We choose a basis $\{ m^{(i)},  \iota^*(u_2) \}$ 
 for $H^{2}(B\tilde{\pi}(H_2))$, where $1\leq i <\dim H^{2}(B\tilde{\pi}(H_2))$.
Here, we do not exclude the possibility that  $\{ m^{(i)}\}$ 
 could be the empty set.
 Then, the set
 $\{ m^{(i)},   \iota^*(u_2), x_2, y_2\}$ is a basis for the subspace of $R$ spanned by 
 elements of degree $2$ and the set
$\{  m^{(i)},  x_2 ,y_2 \}$ is a basis for the subspace of $R/(\iota^{*}(u_2))$
spanned by elements of degree $2$.
The set
\[
\{ v_1 ,w_1, x_1, y_1\}
\] is a basis for $E_2^{1,0}=H^1(BA_2\times B\tilde{\pi}(H_2))$ and the set
\[
\{ m^{(i)}, \iota^*(u_2), x_2, y_2, \; v_1x_1, v_1y_1, w_1x_1, w_1y_1, x_1y_1\}
\] is a basis for $E_2^{2,0}=H^2(BA_2\times B\tilde{\pi}(H_2))$.


First, we compute $E_3$-terms $E_{3}^{0,3}$, $E_3^{2,1}$ and $E_3^{1,3}$.
Let us consider  $R$-module homomorphisms
\[
pr_2^{(k)}:E_2^{*,2k}=R \{z_2^k, x_1z_2^k, y_1z_2^k, x_1y_1z_2^k\} \to R\{ x_1z_2^k, y_1z_2^k, x_1y_1z_2^k \}
\]
sending $z_2^k, x_1z_2^k, y_1z_2^k, x_1y_1z_2^k$ to $0, x_1z_2^k, y_1z_2^k, x_1y_1z_2^k$, respectively.
Recall that
\[
d_2(z_1)=x_1y_1-\iota^*(u_2).
\]


The $E_2$-term $E_2^{0,3}$ is spanned by $z_1z_2$.
It is  clear from $d_2(z_2)=0$ that
\[
d_2(z_1z_2)=d_2(z_1)z_2=(x_1y_1-\iota^*(u_2))z_2\not=0.
\]
Hence the homomorphism 
$d_2:E_2^{0,3}\to E_2^{2,2}$ is injective and we have $E_3^{0,3}=\{ 0\}$.


The $E_2$-term $E_2^{2,1}$ is spanned by
\[
m^{(i)}z_1, \; \iota^*(u_2) z_1, \;  x_2 z_1, \; y_2z_1;  \; \; \; v_1 x_1z_1, \; v_1y_1z_1, \; w_1x_1z_1, \; w_1y_1z_1, \;  x_1y_1z_1
\] 
and \[
d_2(\alpha_2 z_1)=\alpha_2 d_2(z_1) =\alpha_2 x_1y_1-\alpha_2 \iota^*(u_2).
\]
for degree $2$ element $\alpha_2$ in $E_2^{2,0}=H^2(BA_2\times B\tilde{\pi}(H_2))$ since $d_2(\alpha_2)=0$.
If $\alpha_2$ is one of $m^{(i)}, \iota^*(u_2), x_2, y_2$, then $\alpha_2 \iota^*(u_2) \in R\{ 1\}$ and so
$pr_2^{(0)}(\alpha_2 \iota^*(u_2))=0$ by definition. Hence, for $\alpha_2=m^{(i)}, \iota^*(u_2), x_2, y_2$, we have
\[
pr_2^{(0)}(d_2(\alpha_2 z_1))=\alpha_2 x_1y_1.
\]
So, we have 
\[
\begin{array}{lcllclclcl}
pr_2^{(0)}(d_2(m^{(i)} z_1))&=& m^{(i)} x_1y_1, \\
pr_2^{(0)}(d_2(\iota^*(u_2)z_1))&=& \iota^{*}(u_2) x_1y_1, \\
pr_2^{(0)}(d_2(x_2z_1))&=&  x_2 x_1y_1, \\
pr_2^{(0)}(d_2(y_2z_1))&=&  y_2x_1y_1. \\
\end{array}
\]
If $\alpha_2$ is one of $v_1x_1, v_1y_1, w_1x_1, w_1y_1, x_1y_1$, then $\alpha_2 x_1y_1=0$. So, 
we have
\[
d_2(\alpha_2 z_1)=-\alpha_2\iota^*(u_2)=-\iota^{*}(u_2)\alpha_2.
\]
By Proposition~\ref{proposition:5.2}, $\iota^*(u_2)w_1 = 0$ in $H^*(B\tilde{\pi}(H_2))$.
Using this, we have 
\[
\begin{array}{lcllclclcl}
d_2(w_1x_1z_1)&=&   -\iota^*(u_2) w_1x_1&=& 0,   \\
d_2(w_1y_1z_1)&=&  -\iota^*(u_2) w_1y_1&=& 0.
\end{array}
\]
Also,
we have
\[
\begin{array}{lcllclclcl}
pr_2^{(0)}(d_2(v_1x_1z_1))&=& &  -  \iota^*(u_2) v_1x_1, \\
pr_2^{(0)}(d_2(v_1y_1z_1))&=& &&  -  \iota^*(u_2) v_1y_1, \\
pr_2^{(0)}(d_2(x_1y_1z_1))&=&  -\iota^*(u_2) x_1y_1.
\end{array}
\]
By Proposition~\ref{proposition:5.1}, $\iota^{*}(u_2)v_1\not=0$.  
So, the kernel of $pr_2^{(0)} \circ d_2$ is spanned by 
\[
x_1y_1z_1+\iota^*(u_2)z_1, \; w_1x_1z_1, \;
w_1y_1z_1.
\]
On the other hand, we have
\[
d_2(x_1y_1z_1+\iota^*(u_2)z_1)=x_1y_1 (x_1y_1-\iota^*(u_2))+ \iota^*(u_2)( x_1y_1 -\iota^*(u_2))=-\iota^*(u_2)^2, 
\]
and since $\iota^*(u_2)^2\not=0$ by Proposition~\ref{proposition:5.1},
$x_1y_1z_1+\iota^*(u_2)z_1$ is not in the kernel of $d_2$.
Hence, the kernel of $d_2$ is spanned by $w_1x_1z_1$, $w_1y_1z_1$
and
the image of $d_2:E_2^{0,2}\to E_2^{2,1}$ is trivial since $E_2^{0,2}$ is spanned by $z_2$ and $d_2(z_2)=0$.
Thus, we have $E_3^{2, 1}=\mathbb{Z}/p\{ w_1x_1z_1, w_1y_1z_1\}$.


As for the $E_2$-term $E_2^{1,3}$, we have a basis 
\[
\{\;  x_1z_1z_2,\; y_1z_1z_2, \;v_1z_1z_2, \; w_1z_1z_2\; \}
\]
 and
 \[
 d_2(\alpha_1 z_1z_2)=-\alpha_1 d_2(z_1)z_2=-\alpha_1 x_1y_1 z_2+\alpha_1 \iota^{*}(u_2) z_2
 \]
for $\alpha_1=x_1, y_1, v_1, w_1$ since $d_2(\alpha_1)=d_2(z_2)=0$.
For $\alpha_1=x_1, y_1$, since $\alpha_1 x_1y_1=0$, we have
\[ 
d_2(\alpha_1z_1z_2)=\alpha_1 \iota^*(u_2)z_2=\iota^*(u_2) \alpha_1 z_2.
\]
For $\alpha_1=v_1, w_1$, since $\alpha_1 \iota^*(u_2)z_2\in R\{z_2\}$, $pr_2^{(1)}(\alpha_1 \iota^*(u_2)z_2)=0$ by definition. Hence, we have
\[ 
pr_2^{(1)}(d_2(\alpha_1z_1z_2))=-\alpha_1 x_1y_1z_2.
\]
Thus, we obtain
\[
\begin{array}{lcllll}
pr_2^{(1)}(d_2(x_1z_1z_2))&=&&\iota^*(u_2)x_1 z_2, \\
pr_2^{(1)}(d_2(y_1z_1z_2))&=&&&\iota^*(u_2)y_1 z_2, \\
pr_2^{(1)}(d_2(v_1z_1z_2))&=&-v_1 x_1y_1z_2, \\
pr_2^{(1)}(d_2(w_1z_1z_2))&=&-w_1 x_1y_1 z_2. &&&
\end{array}
\]
Hence, it is clear that the composition 
\[
pr_2^{(1)} \circ d_2:E_2^{1,3}\to E_2^{3,2}\to R\{ x_1z_2, y_1z_2, x_1y_1z_2\}
\] is injective and so is $d_2:E_2^{1,3}\to E_2^{3,2}$.
Therefore, 
we have $E_{3}^{1,3}=\{ 0\}$.


Next, we compute the $E_4$-terms  $E_4^{0,4}$, $E_4^{1,2}$, $E_4^{2,2}$.
In the $E_3$-term,  the relations are given by $x_1y_1= \iota^*(u_2)$, $\iota^*(u_2)x_1=0$, 
$\iota^*(u_2)y_1= 0$. In particular, $\iota^{*}(u_2)^2=0$.
For simplicity, we write $R'$, $R''$ for $R/(\iota^*(u_2))$, $R/(\iota^*(u_2)^2)$, respectively. 
We have 
\[
E_3^{*, 2k}=R' \{ x_1z_2^k, y_1z_2^k\} \oplus R''\{ z_2^k \},
 \]
as a graded $\mathbb{Z}/p$-module.
Let $N$ be the subspace of $R'\{ x_1\}$ 
spanned by elements of the form $x x_1$, where $x$ ranges 
over a  basis for $H^{*}(B\tilde{\pi}(H_2))/(\iota^*(u_2))\subset R'$. 
Here, we emphasize  that $N$ is not an $R$-submodule
and  that $\tilde{x} m^{(i)} x_1,  \tilde{x}x_1, \tilde{x} v_1x_1, \tilde{x} w_1x_1$ are 
linearly independent  in
$R'\{ x_1\}/N$
, where $\tilde{x}$ ranges over positive degree monomials in 
$x_2$, $y_2$.
We consider a $\mathbb{Z}/p$-module homomorphism
\[
pr_3:E_3^{*,0}=R'\{  x_1, y_1 \}\oplus R''\{1\} \to R'\{ x_1\}/N \oplus R''\{ 1\},
\]
sending $r' x_1, r'  y_1, r''$ to $r'  x_1, 0, r''$, respectively, where $r' \in R'$ and $r'' \in R''$.
Recall that 
\[
d_3(z_2)=x_2y_1-x_1y_2-\iota^*(u_3).
\]


The $E_3$-term $E_3^{0,4}$ is spanned by $z_2^2$ and 
since  $ y_2 x_1z_2$ is
nontrivial in $R'\{ x_1z_2\}$,
 \[
 d_3(z_2^2)=2 d_3(z_2)z_2=2x_2 y_1 z_2-2x_1y_2 z_2-2\iota^*(u_3)z_2 =
 -2y_2 x_1 z_2 +2x_2 y_1z_2-2\iota^*(u_3)z_2
 \]
is nontrivial  in $E_3^{*,2}= R'\{ x_1z_2, y_1z_2\} \oplus R''\{ z_2\}$.
Hence, $d_3:E_3^{0,4}\to E_3^{3,2}$ is injective and we have $E_4^{0,4}=\{0\}$.


The $E_3$-term $E_3^{1,2}$
is spanned by
\[
 v_1 z_2, \; w_1 z_2, \; x_1z_2, \; y_1 z_2,
\] 
since the subspace of $R''$ spanned by degree $1$ elements is same to $H^{1}(B\tilde{\pi}(H_2))$
and since $H^{1}(B\tilde{\pi}(H_2))$ is spanned by $v_1, w_1$.
For $\alpha_1 = v_1, w_1, x_1, y_1$, since $d_3(\alpha_1)=0$, we have 
\[
d_3(\alpha_1z_2)=-\alpha_1 d_3(z_2)=-\alpha_1 x_2 y_1+\alpha_1 x_1y_2 +\alpha_1 \iota^*(u_3).
\]
Hence, for $\alpha_1=v_1, w_1$, since $pr_3(\alpha_1 x_2 y_1)=0$ by definition, we have
\[
pr_3(d_3(\alpha_1z_2))=\alpha_1 x_1 y_2 +\alpha_1\iota^*(u_3)= y_2 \alpha_1  x_1+ \alpha_1 \iota^*(u_3).
\]
For $\alpha_1=x_1, y_1$, since $x_1^2=y_1^2=0$, $x_1y_1=\iota^*(u_2)$, $y_1x_1=-\iota^{*}(u_2)$, we have
\[
d_3(x_1z_2)=-x_1 x_2 y_1+x_1\iota^*(u_3)=-\iota^*(u_3)x_1-x_2  \iota^{*}(u_2) 
\]
and 
\[
d_3(y_1z_2))=y_1 x_1y_2+y_1\iota^*(u_3)=-\iota^*(u_3) y_1-y_2 \iota^*(u_2).
\]
Since $\iota^*(u_3)x_1$ is in $N$, $pr_3(\iota^{*}(u_3)x_1)=0$.
By definition, $pr_3( \iota^{*}(u_3)y_1)=0$.
Therefore, we have 
\[
\begin{array}{lclclcl}
pr_3(d_3(v_1z_2))&  = &v_1y_2x_1  & +& v_1 \iota^*(u_3), \\
pr_3( d_3(w_1z_2))& = &w_1y_2x_1 & +& w_1 \iota^*(u_3), \\
pr_3(d_3(x_1z_2))&= & & - & x_2\iota^{*}(u_2) ,\\
pr_3(d_3(y_1z_2))& =& & -  &y_2\iota^{*}(u_2) .
\end{array}
\]
Since $v_1y_2x_1$, $w_1y_2x_1$ are linearly independent in $R'\{ x_1\}/N$ and since 
$\iota^*(u_2)x_2$, $\iota^{*}(u_2)y_2$ are linearly independent in 
the subspace $\mathbb{Z}/p\{ x_2, y_2\} \otimes H^{2}(B\tilde{\pi}(H_2))
\subset R''\{ 1\}$, 
the four elements 
\[
d_3(v_1z_2), \; d_3(w_1z_2), \; d_3(x_1z_2), \; d_3(y_1z_2)
\]
are linearly independent in $E_3^{*,0}= R'\{ x_1, y_1\}\oplus R''\{ 1\}$.
Hence, the homomorphism $d_3:E_3^{1,2}\to E_3^{4,0}$ 
is injective.   
Therefore, we have $E_4^{1,2}=\{0\}$.


The $E_3$-term $E_3^{2,2}$ is spanned by 
 \[
 m^{(i)} z_2, \; 
     \iota^*(u_2) z_2, \; 
   x_2 z_2, \;
   y_2z_2; \;
 v_1x_1z_2, \; 
 v_1y_1z_2, \;
 w_1x_1z_2, \;
w_1y_1z_2.
\]
For $\alpha_2= m^{(i)},  \iota^*(u_2), x_2, y_2, v_1x_1, v_1y_1, w_1x_1,w_1 y_1\in E_3^{2,0}$,
since $d_3(\alpha_2) \in E_3^{5,-2}=\{0\}$, 
we have \[
d_3( \alpha_2 z_2)=\alpha_2 d_3(z_2)=\alpha_2 x_2 y_1- \alpha_2 x_1y_2-\alpha_2 \iota^*(u_3).
\]
For $\alpha_2=m^{(i)},  \iota^*(u_2), x_2, y_2$, since $pr_3(\alpha_2 x_2y_1)=0$ by definition,
we have
\[
pr_3(d_3( \alpha_2 z_2))=-\alpha_2 y_2 x_1 - \alpha_2 \iota^{*}(u_3).
\]
Thus, we have 
\[
\begin{array}{lcllll}
pr_3(d_3( m^{(i)} z_2)) &=& -  y_2 m^{(i)}  x_1    -  m^{(i)} \iota^*(u_3), \\
pr_3(d_3( x_2 z_2)) &=& -  x_2 y_2 x_1    -  x_2 \iota^*(u_3), \\
pr_3(d_3( y_2 z_2)) &=& -  y_2^2 x_1    -  y_2 \iota^*(u_3).
\end{array}
\]
Moreover, since $\iota^{*}(u_2)\iota^*(u_3)=\iota^{*}(u_2u_3)=0$ in $H^{*}(B\tilde{\pi}(H_2))$ by Proposition~\ref{proposition:3.2}, 
and since
$\iota^*(u_2)x_1=\iota^*(u_2)y_1=0$ in $R' \{ x_1, y_1\}$,  
we have 
\[
d_3(\iota^{*}(u_2)z_2)=0.
\]
For $\alpha_1=v_1, w_1$, using the relations $x_1^2=y_1^2=0$, $x_1y_1=\iota^*(u_2)$, $y_1x_1=-\iota^*(u_2)$ in $E_3$, we have
\[
d_3( \alpha_1 x_1z_2)=\alpha_1 x_1 x_2 y_1-\alpha_1 x_1 x_1 y_2 - \alpha_1 x_1\iota^*(u_3)= \alpha_1 \iota^*(u_3) x_1+x_2 \alpha_1 \iota^*(u_2)
\]
and
\[
d_3( \alpha_1 y_1z_2)=\alpha_1 y_1 x_2 y_1-\alpha_1 y_1 x_1 y_2 - \alpha_1 y_1\iota^*(u_3)=
\alpha_1 \iota^*(u_3) y_1+ y_2 \alpha_1 \iota^*(u_2).
\]
Since $\alpha_1 \iota^*(u_3) \in H^{*}(B\tilde{\pi}(H_2))/(\iota^*(u_2))$, we obtain 
$\alpha_1 \iota^*(u_3) x_1\equiv 0$ in $R'\{x_1\}/N$, hence $pr_3(\alpha_1 \iota^*(u_3)x_1)=0$. Moreover, $pr_3(\alpha_1 \iota^*(u_3) y_1)=0$ by definition. 
So, we have
\[
pr_3(d_3(\alpha_1 x_1z_2))=\alpha_1x_2 \iota^*(u_2)=x_2\alpha_1 \iota^*(u_2)
\]
and
\[
pr_3(d_3(\alpha_1 y_1z_2))=\alpha_1y_2 \iota^*(u_2)=y_2\alpha_1 \iota^*(u_2).
\]
By Proposition~\ref{proposition:5.2}, $w_1 \iota^{*}(u_2)=0$. Hence, we have
 \[
  d_3(w_1x_1z_2)=w_1\iota^{*}(u_3)x_1
  \]
  and 
  \[
  d_3(w_1y_1z_2)=w_1\iota^{*}(u_3)y_1.
  \]
Furthermore, by Proposition~\ref{proposition:5.2}, 
 $Q_0 (w_1\iota^*(u_2))=Q_0 w_1 \cdot  \iota^{*}(u_2)-w_1\iota^{*}(u_3)=0$ in $H^{*}(B\tilde{\pi}(H_2))$,  
hence  $w_1\iota^{*}(u_3)x_1=(Q_0 w_1) \iota^*(u_2) x_1 = 0$ in $R'\{x_1, y_1\}\subset E_3^{*,0}$. 
Thus, we obtain $d_3(w_1x_1z_2)=0$.
Similarly, we also have 
$
  d_3(w_1y_1z_2)=0.
$
Thus, we have
\[
\begin{array}{lcllll}
pr_3(d_3( v_1x_1 z_2)) &=&      x_2 v_1 \iota^*(u_2)  , \\
pr_3(d_3( v_1y_1 z_2)) &=&      y_2 v_1 \iota^*(u_2), 
\end{array}
\]
and
\[
\begin{array}{lcllll}
d_3( w_1x_1 z_2)&=& 0, \\
d_3( w_1y_1 z_2)&=& 0.
\end{array}
\]
Since $y_2 m^{(i)} x_1, x_2y_2 x_1,  y_2^2 x_1$ are linearly independent 
in $R' \{ x_1\}/N$ and since, by Proposition~\ref{proposition:5.1}, 
$x_2 v_1\iota^*(u_2) $, $y_2 v_1 \iota^*(u_2) $ are linearly independent in $\mathbb{Z}/p\{ x_2, y_2\}
\otimes H^{3}(B \tilde{\pi}(H_2))\subset R''\{1\}$, 
the kernel of $pr_3\circ d_3$ is spanned by 
$\iota^*(u_2)z_2, w_1x_1z_2, w_1y_1z_2$
and since these are in the kernel of $d_3$, the kernel of $d_3$ is spanned by these elements.
Moreover, the image $d_3:E_3^{-1,4}\to E_3^{2,2}$ is trivial. Therefore,  we 
obtain
\[
E_4^{2,2}=\mathbb{Z}/p\{ \iota^{*}(u_2)z_2, w_1x_1z_2, w_1y_1z_2\}=\mathbb{Z}/p\{ x_1y_1z_2, w_1x_1z_2, w_1y_1z_2\}, 
\]
where $\iota^*(u_2)z_2=x_1y_1z_2$.


Finally, we compute $E_\infty$-terms $E_\infty^{0,3}, E_\infty^{1,2}, E_\infty^{2,1}$ and 
$E_\infty^{0,4}, E_\infty^{1,3}, E_\infty^{2,2}$. 
Since $E_3^{0,3}=E_4^{1,2}=\{0\}$, we have $E_\infty^{0,3}=E_\infty^{1,2}=\{0\}$.
Similarly, since $E_4^{0,4}=E_3^{1,3}=\{0\}$, we have $E_\infty^{0,4}=E_\infty^{1,3}=\{0\}$.
Since the Leray-Serre spectral sequence is the first quadrant spectral sequence, 
for $s\leq r-1$, $t\leq r-2$, 
\[
E_r^{s-r,t+r-1}=E_r^{s+r, t-r+1}=\{0\},
\]
and differentials
\[
d_r:E_r^{s-r,t+r-1} \to E_r^{s,t}, \quad d_r:E_r^{s,t}\to E_r^{s+r,t-r+1}
\]
 are trivial. 
Hence, we have $E_r^{s, t}=E_\infty^{s,t}$ for $s\leq r-1$, $t\leq r-2$. In particular, 
$E_3^{s,t}=E_\infty^{s,t}$ for $s\leq 2$, $t\leq 1$,
$E_4^{s,t}=E_\infty^{s,t}$ for $s\leq 3$, $t\leq 2$.
Hence, we have  $E_\infty^{2,1}=E_3^{2,1}$, $E_\infty^{2,2}=E_4^{2,2}$.
\end{proof}


In Section~\ref{section:4}, we defined $x_4\in H^4(BG;\mathbb{Z})$, so that $c_2(\lambda'')=p x_4$ in $H^4(BG;\mathbb{Z})$. Therefore, to show that $c_2(\lambda''\circ f) \not=0$ in $H^4(BH;\mathbb{Z})$
is equivalent to show that
 $p f^*(x_4)\not=0$ in $H^4(BH;\mathbb{Z})$.
Hence, in order to prove (1), it suffices to show that 
the mod $p$ reduction $\rho(f^*(x_4))\in H^4(BH)$ of $f^*(x_4)\in H^4(BH;\mathbb{Z})$ is 
not in the image of the Bockstein homomorphism. So, we prove the following 
 proposition.
 
\begin{proposition}
\label{proposition:6.2}
The cohomology class
$f^*(\rho(x_4))$ is not in the image of the Bockstein homomorphism
\[
Q_0:H^3(BH) \to H^4(BH).
\]
\end{proposition}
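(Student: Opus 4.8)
The plan is to detect the class $f^{*}(\rho(x_4))$ in the Leray--Serre spectral sequence $E_r^{s,t}(BH)$ of Proposition~\ref{proposition:6.1} and to show that the image of the Bockstein $Q_0$ cannot reach it. First I would transport Proposition~\ref{proposition:4.2} along the morphism of spectral sequences induced by $f\colon BH\to BG$. Since $f$ restricts on the first base factor to the inclusion of $A_2$ into $PU(p)$, we have $f^{*}(b_2)=f^{*}(u_2\otimes 1)=x_1y_1$ by Proposition~\ref{proposition:3.3}. As $f^{*}$ is a ring map on $E_\infty$, the permanent cocycle $\alpha b_2z_2$, which is the image of $\rho(x_4)$ in $E_\infty^{2,2}(BG)$ by Proposition~\ref{proposition:4.2} (with $\alpha\neq 0$), pulls back to the permanent cocycle $\alpha x_1y_1z_2$. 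By Proposition~\ref{proposition:6.1} this is a nonzero basis element of $E_\infty^{2,2}(BH)$, so $f^{*}(\rho(x_4))$ lies in $F^2H^4(BH)\setminus F^3H^4(BH)$ and represents $\alpha x_1y_1z_2$.

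Since $Q_0$ is a stable operation it commutes with the restriction maps defining the filtration, so $Q_0(F^sH^n)\subseteq F^sH^{n+1}$ and $Q_0$ induces a filtration-preserving operation $\overline{Q_0}\colon E_\infty^{s,t}\to E_\infty^{s,t+1}$. Suppose, toward a contradiction, that $f^{*}(\rho(x_4))=Q_0(\eta)$ for some $\eta\in H^3(BH)$. Because $E_\infty^{0,3}=E_\infty^{1,2}=\{0\}$ by Proposition~\ref{proposition:6.1}, the filtration on $H^3(BH)$ collapses to $H^3(BH)=F^2H^3(BH)$, so $\eta$ automatically determines a class $[\eta]\in E_\infty^{2,1}(BH)$. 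Taking images in $E_\infty^{2,2}(BH)$ and using $[Q_0(\eta)]=\overline{Q_0}([\eta])$ then yields $\overline{Q_0}([\eta])=\alpha x_1y_1z_2$.

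The core computation is that of $\overline{Q_0}$ on $E_\infty^{2,1}(BH)=\mathbb{Z}/p\{w_1x_1z_1,\,w_1y_1z_1\}$. On the $E_2$-description $H^{*}(\text{base})\otimes H^{*}(\text{fibre})$ the Bockstein acts by the Leibniz rule as $(Q_0\otimes 1)+(1\otimes Q_0)$; the summand $Q_0\otimes 1$ raises the base degree and so lands in filtration $3$, while the filtration-preserving summand $1\otimes Q_0$ is the fibre Bockstein, sending $z_1\mapsto z_2$ and fixing the base classes $w_1,x_1,y_1$. Hence $\overline{Q_0}(w_1x_1z_1)=w_1x_1z_2$ and $\overline{Q_0}(w_1y_1z_1)=w_1y_1z_2$, so the image of $\overline{Q_0}\colon E_\infty^{2,1}\to E_\infty^{2,2}$ is $\mathbb{Z}/p\{w_1x_1z_2,\,w_1y_1z_2\}$. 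By Proposition~\ref{proposition:6.1} the element $x_1y_1z_2$ is the remaining, linearly independent basis vector of $E_\infty^{2,2}(BH)$, so $\overline{Q_0}([\eta])=\alpha x_1y_1z_2$ forces $\alpha=0$, a contradiction. This proves the proposition.

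I expect the main obstacle to be the rigorous justification of this last computation, namely that $\overline{Q_0}$ on $E_\infty^{2,1}$ is exactly the fibre Bockstein $z_1\mapsto z_2$ with no further contribution to the $x_1y_1z_2$-component. Two supporting facts must be handled carefully: that $w_1x_1z_1$ and $w_1y_1z_1$ genuinely survive to $E_\infty$, which rests on Proposition~\ref{proposition:5.2} via $d_2(w_1x_1z_1)=-w_1x_1\iota^{*}(u_2)=0$; and that the leading term of $Q_0(\eta)$ in $F^2/F^3$ is correctly read off from the Leibniz formula on representatives. It is also worth explaining why one cannot simply pull back along $g\colon BA_3\to BH$: the class $\rho(x_4)$ is the reduction of an integral class, hence lies in $\ker Q_0$, and the reduced Bockstein homology of the elementary abelian $BA_3$ vanishes, so $(f\circ g)^{*}(\rho(x_4))$ \emph{does} become a $Q_0$-image on $BA_3$. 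The obstruction is therefore visible only at the level of $BH$, which is precisely why the filtration analysis above is essential.
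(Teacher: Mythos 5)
Your proposal is correct and takes essentially the same approach as the paper: both detect $f^{*}(\rho(x_4))$ as the nonzero class $\alpha x_1y_1z_2$ in $E_\infty^{2,2}(BH)$ via $f^{*}(b_2)=x_1y_1$, and both show that any class in $Q_0H^3(BH)$ can only represent a linear combination of $w_1x_1z_2$ and $w_1y_1z_2$ there, which is linearly independent of $x_1y_1z_2$ by Proposition~\ref{proposition:6.1}. The one step you rightly flag as needing rigorous justification—that the filtration-induced Bockstein on $E_\infty^{2,1}$ is computed by the vertical (fibre) Bockstein $z_1\mapsto z_2$ on representatives—is exactly what the paper supplies by citing Araki's vertical operation $\beta\wp^0$ \cite[Corollary 4.1]{araki-1957}.
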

 \begin{proof}
 Since $E_\infty^{0,4}=E_\infty^{1,3}=\{ 0\}$, we have $F^2H^4(BH)=H^4(BH)$.
 Similarly, since $E_\infty^{0,3}=E_\infty^{1,2}=\{ 0\}$, 
 we have $F^2H^3(BH)=H^3(BH)$.
Hence, we have 
\[
Q_0H^3(BH)\subset F^2H^4(BH)
\]
and each cohomology class in $Q_0H^3(BH)$ 
represents an element in 
\[
E_{\infty}^{2,2}=F^2H^4(BH)/F^3H^4(BH).
\]
Since $E_\infty^{2,1}$ is spanned by $w_1x_1z_1$, $w_1y_1z_1$, 
using the properties of the vertical operation $\beta \wp^0$ constructed by Araki in the
spectral sequence of a fibration \cite[Corollary 4.1]{araki-1957}, we have that 
if $x$ is in $Q_0H^3(BH)$, then $x$ represents 
a linear combination of $w_1x_1z_2$, $w_1y_1z_2$
in $E_{\infty}^{2,2}$.

On the other hand, by Proposition~\ref{proposition:4.2}, 
$\rho(x_4) \in H^4(BG)$ represents   $\alpha b_2 z_2\in E_\infty^{2,2}(BG)$, where $\alpha \not = 0 \in \mathbb{Z}/p$.
Using Proposition~\ref{proposition:3.3} and the definition of $b_2$ in Section~\ref{section:4}), we have
$f^*(b_2)=x_1y_1$.
Therefore,
$f^*(\rho(x_4))$ represents 
$\alpha x_1y_1z_2$ in $E_\infty^{2,2}$. Hence, $f^*(\rho(x_4))$
is not in the image of the 
Bockstein homomorphism $Q_0$.
\end{proof}

\begin{remark}\label{remark:6.3}
If we replace $H_2$ by the extraspecial $p$-group $p_{+}^{1+2}$, then (1) does not hold.
To be more precise, $f^{*}(\rho(x_4))$ is in the image of the Bockstein homomorphism
$Q_0:H^3(Bp_{+}^{1+4})\to H^4(Bp_{+}^{1+4})$ and $c_2(\lambda''\circ f) =p f^*(x_4)=0$ 
in $H^4(Bp_{+}^{1+4};\mathbb{Z})$.
\end{remark}

Finally, we prove  (2) by proving the following proposition.
\begin{proposition}
\label{proposition:6.4}
There exists no virtual complex representation 
\[
\mu:BH\to \mathbb{Z}\times BU
\]
 such  that 
$c_2(\lambda''\circ f)\in p \cdot \mathrm{Im}\, {\mu}^*$.
\end{proposition}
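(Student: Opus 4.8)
The plan is to argue by contradiction, reducing non-divisibility to a statement about the Milnor operation $Q_1$, just as for $G$ in Section~\ref{section:4} but now keeping track of torsion. Suppose a virtual complex representation $\mu:BH\to\mathbb{Z}\times BU$ existed with $c_2(\lambda''\circ f)\in p\cdot\mathrm{Im}\,\mu^*$. Since $c_2(\lambda''\circ f)=pf^*(x_4)$, we could write $f^*(x_4)=\mu^*(w)+t$ in $H^4(BH;\mathbb{Z})$, where $\mu^*(w)$ is a Chern class and $pt=0$. Because $H^{\mathrm{odd}}(\mathbb{Z}\times BU)=\{0\}$, the reduction of a Chern class is annihilated by $Q_1$: $Q_1\rho(\mu^*(w))=\mu^*(Q_1\rho(w))=0$. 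A $p$-torsion class lies in the image of the integral Bockstein, so $\rho(t)=Q_0(s)$ for some $s\in H^3(BH)$, whence $Q_1\rho(t)=Q_1Q_0(s)=-Q_0Q_1(s)$. Applying $Q_1\rho$ to $f^*(x_4)$ therefore yields $Q_1\rho(f^*(x_4))\in\mathrm{Im}\,Q_0$. Thus it suffices to prove that $Q_1\rho(f^*(x_4))$ is \emph{not} in the image of $Q_0:H^{2p+2}(BH)\to H^{2p+3}(BH)$; equivalently, since $\rho(f^*(x_4))$ already represents a nonzero Bockstein-homology class in degree $4$ by Proposition~\ref{proposition:6.2}, that $Q_1$ carries this class to a nonzero Bockstein-homology class in degree $2p+3$.

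To control $Q_1\rho(f^*(x_4))$ I would continue the analysis of the spectral sequence $E_r^{s,t}(BH)$ of Proposition~\ref{proposition:6.1} into degree $2p+3$, and simultaneously restrict along $f\circ g':BA_3'\to BG$, in the spirit of Propositions~\ref{proposition:4.1} and~\ref{proposition:6.2}. On $BA_3'$ the restriction is governed by $g'^*(x_1)=\bar x_1$, $g'^*(y_1)=0$, $g'^*(w_1)=\bar y_1$, $g'^*(v_1)=0$, together with $g'^*\iota^*(u_2)=0$ and $g'^*\iota^*(u_3)=0$, the last two coming from Propositions~\ref{proposition:5.1} and~\ref{proposition:5.2} because $g'$ routes the $\tilde\pi(H_2)$-coordinate through $\iota_\beta$. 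Feeding the differentials $d_2(z_1)=x_1y_1-\iota^*(u_2)$ and $d_3(z_2)=x_2y_1-x_1y_2-\iota^*(u_3)$ through these identities determines $(f\circ g')^*\rho(x_4)$, and applying $Q_1$ (using $Q_1x_1=x_2^p$, $Q_1z_1=z_2^p$, $Q_1z_2=0$ and the derivation property) isolates the surviving monomials. On the $BH$ side I would combine this with Araki's vertical operation $Q_0=\beta\wp^0$ of \cite{araki-1957}, already used in Proposition~\ref{proposition:6.2}, to compute the $E_\infty$-class represented by $Q_1\rho(f^*(x_4))$ and the $E_\infty$-classes represented by $\mathrm{Im}\,Q_0$, and then to check that they are disjoint.

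The hard part will be exactly this last comparison, i.e.\ upgrading ``$Q_1\rho(f^*(x_4))\neq0$'' to ``$Q_1\rho(f^*(x_4))\notin\mathrm{Im}\,Q_0$'' in the degree $2p+3$, which lies far beyond the range of Proposition~\ref{proposition:6.1}. Two features make this delicate. First, the leading term $\alpha x_1y_1z_2$ of $\rho(f^*(x_4))$ is killed by $g'^*$ (since $g'^*(y_1)=0$), so restriction to $A_3'$ sees only higher-filtration contributions and must be read off carefully. Second, because $A_3'$ is elementary abelian its $Q_0$-homology vanishes in positive degrees, so restriction to $A_3'$ can certify nonvanishing but can never by itself certify non-membership in $\mathrm{Im}\,Q_0$; the decisive input must be intrinsic to $BH$ and comes from the order-$p^2$ generator $\sigma_1$ of $H_2$. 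Concretely, I expect that the relations $\iota^*(u_2)v_1\neq0$, $\iota^*(u_2^2)\neq0$ and $\iota^*(u_2)w_1=0$ of Propositions~\ref{proposition:5.1} and~\ref{proposition:5.2} are what force the relevant $E_\infty$-term of $Q_1\rho(f^*(x_4))$ to lie outside the image of $Q_0$. That this structural input is essential is confirmed by Remark~\ref{remark:6.3}: replacing $H_2$ by the extraspecial group $p_{+}^{1+2}$ destroys these relations and makes the analogous statement fail.
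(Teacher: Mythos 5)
Your opening reduction is sound and coincides with the paper's: assuming $c_2(\lambda''\circ f)\in p\cdot\mathrm{Im}\,\mu^*$ you get $f^*(x_4)=\mu^*(w)+t$ with $pt=0$, the image of $\mu^*$ is killed by $Q_1$, and $\rho(t)=Q_0(s)$ for some $s\in H^3(BH)$. But at the next step you discard essential structure. The error term in degree $2p+3$ is not an arbitrary element of $\mathrm{Im}\,Q_0$: it is $Q_1Q_0(s)=-Q_0Q_1(s)$, i.e.\ $Q_1$ applied to the \emph{degree-$4$} class $Q_0(s)$. What is actually needed is only that
\[
Q_1\bigl(\rho(f^*(x_4))+Q_0(s)\bigr)\neq 0 \qquad \text{for every } s\in H^3(BH),
\]
whereas you reduce to the strictly stronger claim $Q_1\rho(f^*(x_4))\notin Q_0H^{2p+2}(BH)$. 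That stronger claim is nowhere proved in your proposal; you yourself observe that your main tool cannot prove it (restriction to $A_3$ or $A_3'$ can never certify non-membership in $\mathrm{Im}\,Q_0$, since elementary abelian $p$-groups have trivial Bockstein homology in positive degrees), and the fallback you offer --- extending the spectral sequence of Proposition~\ref{proposition:6.1} from degree $4$ up to degree $2p+3$ --- is a computation of a different order of magnitude that the paper never performs and that your reduction forces on you unnecessarily. This is a genuine gap, located exactly at what you call ``the hard part.''

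The paper closes this gap by doing the degree-$4$ control \emph{before} applying $Q_1$. By Proposition~\ref{proposition:6.1} and Araki's vertical operation (as in Proposition~\ref{proposition:6.2}), every class in $Q_0H^3(BH)$ represents a combination $\alpha_1 w_1x_1z_2+\alpha_2 w_1y_1z_2$ in $E_\infty^{2,2}$, so the hypothetical class $\rho(\mu^*(y_4))=\rho(f^*(x_4))+Q_0(s)$ represents $\alpha x_1y_1z_2+\alpha_1 w_1x_1z_2+\alpha_2 w_1y_1z_2$ with $\alpha\neq0$ and $\alpha_1,\alpha_2$ unknown. The unknowns are then eliminated by restricting along \emph{both} $g:A_3\to H$ and $g':A_3'\to H$: since $g^*$ sends $y_1,w_1\mapsto y_1$ while ${g'}^*$ sends $y_1\mapsto 0$, $w_1\mapsto y_1$, the two restrictions have leading coefficients $\alpha-\alpha_1$ and $-\alpha_1$, which cannot both vanish. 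Applying $Q_1$ modulo the $Q_1$-stable submodules $M$, $M'$ (exactly as in Proposition~\ref{proposition:4.1}) gives $Q_1\rho(\mu^*(y_4))\neq0$, contradicting $Q_1\rho(\mu^*(y_4))=0$. Note that your sketch lacks both ingredients: the identification of $\mathrm{Im}\,Q_0$ inside $E_\infty^{2,2}$ in degree $4$, and the use of the \emph{pair} $(g,g')$. Your plan restricts only along $g'$, applied moreover to $\rho(f^*(x_4))$ itself rather than to $\rho(\mu^*(y_4))$; but ${g'}^*$ kills the leading term $x_1y_1z_2$ of $\rho(f^*(x_4))$, so by itself it detects nothing --- in the paper $g'$ exists precisely to detect the error coefficient $\alpha_1$, while $g$ detects $\alpha-\alpha_1$.
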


\begin{proof}
Proof by contradiction. 
Suppose that there exists a virtual complex representation 
\[
\mu:BH\to \mathbb{Z}\times BU
\]
 such that 
$c_2(\lambda''\circ f)\in p \cdot \mathrm{Im} {\mu}^*$.
Then, $p({\mu}^*(y_4)-f^*(x_4))=0$ for some 
$y_4\in H^4(\mathbb{Z} \times BU;\mathbb{Z})$.
Since  $Q_1$ acts trivially on $H^{*}(\mathbb{Z} \times BU)$, we have
\[
Q_1\rho(\mu^*(y_4))=0.
\]
In what follows, we show that
\[
Q_1\rho(\mu^*(y_4))\not =0,
\]
which proves the proposition.

Since $p({\mu}^*(y_4)-f^*(x_4))=0$, $\rho({\mu}^*(y_4)-f^*(x_4))$ is in the image of the 
Bockstein homomorphism, that is, as in the proof of Proposition~\ref{proposition:6.2}, 
$\rho({\mu}^*(y_4)-f^*(x_4))$ represents 
\[
\alpha_1 w_1x_1z_2+\alpha_2 w_1y_1z_2
\]
 in $E_\infty^{2,2}$ for some $\alpha_1, \alpha_2 \in \mathbb{Z}/p$.
We already verified that $f^*(\rho(x_4))=\rho(f^*(x_4))$
 represents 
 $\alpha x_1y_1z_2\in E_\infty^{2,2}$, where $\alpha\not=0$,
 in the proof of Proposition~\ref{proposition:6.2}. So,
$\rho({\mu}^*(y_4))$ represents
\[
\alpha x_1y_1z_2+\alpha_1 w_1x_1z_2+\alpha_2 w_1y_1z_2
\]
 in $E_\infty^{2,2}$ and $\alpha\not=0$.
 
 We recall the structure of $H_2$ defined in Section~\ref{section:2}. Also, we recall the diagram
 \[
\begin{diagram}
\node{A_3} \arrow{e,t}{g}  \arrow{s,l}{\varphi} 
\node{H}  \arrow{s,r}{\pi}
\node{A_3'} \arrow{s,r}{\varphi'}\arrow{w,t}{g'}
\\
\node{A_2} \arrow{e,t}{g}
\node{A_2\times \tilde{\pi}(H_2)} 
\node{A_2.}\arrow{w,t}{g'}
\end{diagram}
\]
where  upper $g, g'$ are obvious inclusions, 
$A_2=\langle \tilde{\pi}(\alpha), \tilde{\pi}(\beta) \rangle$, 
\[
\begin{array}{ll}
g(\tilde{\pi}(\alpha))=(\tilde{\pi}(\alpha), \tilde{\pi}(\alpha)), &g(\tilde{\pi}
(\beta))=(\tilde{\pi}(\beta), \tilde{\pi}(\beta)), 
\\[.2cm]
g'(\tilde{\pi}(\alpha))=(\tilde{\pi}(\alpha), 1), & g'(\tilde{\pi}(\beta))=(1, \tilde{\pi}(\beta)).
\end{array}
\]
In Section~\ref{section:5}, we defined $w_1\in H^1(B\tilde{\pi}(H_2))$, so that the induced homomorphism
$H^1(B\tilde{\pi}(H_2))\to H^1(B\langle \tilde{\pi}(\beta) \rangle)$ maps $w_1$ to the element corresponding to the generator $\tilde{\pi}(\beta)$.
So, 
we see that  the induced homomorphisms $g^*$, ${g'}^*$ satisfy
\[
\begin{array}{llll}
g^*(x_1) = x_1, & g^*(y_1)=y_1, & g^*(w_1) =y_1,
 \\
 {g'}^*(x_1)=x_1, 
&
 {g'}^*(y_1)=0,
&
 {g'}^*(w_1)=y_1.
\end{array}
\]
Therefore, $g^*(\rho(\mu^*(y_4)))\in H^4(BA_3)$ represents 
\[
g^*(\alpha x_1y_1z_2+\alpha_1 w_1x_1z_2+\alpha_2 w_1y_1z_2)=\alpha x_1y_1z_2+\alpha_1 y_1x_1z_2=(\alpha-\alpha_1) x_1y_1z_2
\]
in  the spectral sequence for $H^{*}(BA_3)$
and ${g'}^*(\rho(\mu^*(y_4)))\in H^4(BA_3')$ represents
\[
{g'}^*(\alpha x_1y_1z_2+\alpha_1 w_1x_1z_2+\alpha_2 w_1y_1z_2)=\alpha_1 y_1x_1z_2=-\alpha_1 x_1y_1z_2 
\]
in  the spectral sequence for $H^{*}(BA_3')$.

As in the proof of Proposition~\ref{proposition:4.1}, 
let $M, M'$ be the $\varphi^{*}(H^*(BA_2))$-submodule of $H^{*}(BA_3)$, the ${\varphi'}^*(H^{*}(BA_2))$-submodule 
of $H^{*}(BA_3')$, generated by
\[
1, \;z_2^{i}, \; z_1, \; z_1z_2,  \; z_1z_2^i \; (i\geq 2), 
\]
where $\varphi:BA_3\to BA_1$, $\varphi':BA_3'\to BA_2$ are the maps defined in Section~\ref{section:2}, so that 
\[
H^{*}(BA_3)/M=\varphi^{*}(H^{*}(BA_2)) \{ z_2 \}=\mathbb{Z}/p[x_2, y_2]\otimes \Lambda(x_1, y_1) \{z_2\}, 
\]
and
\[
H^{*}(BA'_3)/M'={\varphi'}^{*}(H^{*}(BA_2)) \{ z_2 \}=\mathbb{Z}/p[x_2, y_2]\otimes \Lambda(x_1, y_1) \{z_2\},
\]
respectively.
Since $Q_1 z_1=z_2^p$, $Q_1z_2=0$, and $Q_1$ is a derivation, $M, M'$ are closed under the action of 
Milnor operation $Q_1$.
Modulo $M, M'$, 
we have 
\begin{align*}
g^*(\rho(\mu^*(y_4)))
&
\equiv(\alpha-\alpha_1)x_1y_1z_2 \mod M,
\\
{g'}^*(\rho(\mu^*(y_4)))
&
\equiv
-\alpha_1x_1y_1z_2  \mod M'.
\end{align*}
and so 
\begin{align*}
Q_1g^*(\rho(\mu^*(y_4)))&
\equiv (\alpha-\alpha_1) (x_2^py_1 - x_1y_2^p)z_2  \mod M, 
\\
Q_1{g'}^*(\rho(\mu^*(y_4)))
&
\equiv
-\alpha_1 (x_2^py_1- x_1y_2^p)z_2 \mod M'.
\end{align*}
Since $\alpha\not=0$, at least one of $\alpha-\alpha_1$, $-\alpha_1$ is non-zero.
Therefore, we have
\[
Q_1\rho(\mu^*(y_4))\not=0.
\]
This completes the proof.
\end{proof}

 \end{document}